\newtheorem*{remark*}{Remark}
\newcommand{\blue}[1]{{#1}}
\DeclareMathAlphabet\mathbfcal{OMS}{cmsy}{b}{n}
\newcommand{\BEAS}{\begin{eqnarray*}}
\newcommand{\EEAS}{\end{eqnarray*}}
\newcommand{\BEA}{\begin{eqnarray}}
\newcommand{\EEA}{\end{eqnarray}}
\newcommand{\BEQ}{\begin{equation}}
\newcommand{\EEQ}{\end{equation}}
\newcommand{\BIT}{\begin{itemize}}
\newcommand{\EIT}{\end{itemize}}
\newcommand{\BNUM}{\begin{enumerate}}
\newcommand{\ENUM}{\end{enumerate}}
\newcommand{\BA}{\begin{array}}
\newcommand{\EA}{\end{array}}
\newcommand{\R}{{\mathbb{R}}}
\newtheorem{theorem}{Theorem}
\newtheorem{lemma}[theorem]{Lemma}
\newtheorem{corollary}[theorem]{Corollary}
\newtheorem{remark}[theorem]{Remark}
\title{Mean first exit times of Ornstein--Uhlenbeck processes \\ in high-dimensional spaces}
\author{%
  Hans Kersting \\
  Inria, Ecole Normale Sup\'erieure \\
  PSL Research University, Paris, France. \\
  \texttt{hans.kersting@inria.fr}
  \and
  Antonio Orvieto \\
  Department of Computer Science\\
  ETH Zürich, Zürich, Switzerland.\\
  \texttt{antonio.orvieto@inf.ethz.ch} 
  \and
  Frank Proske \\
  Department of Mathematics \\
  University of Oslo, Oslo, Norway.\\
  \texttt{proske@math.uio.no}
  \and 
  Aurelien Lucchi \\
  Dpt. of Mathematics \& Computer Science \\
  University of Basel, Basel, Switzerland.\\
  \texttt{aurelien.lucchi@unibas.ch}
}
\begin{document}

\maketitle

\begin{abstract}
The $d$-dimensional Ornstein--Uhlenbeck process (OUP) describes the trajectory of a particle in a $d$-dimensional, spherically symmetric, quadratic potential. 
The OUP is composed of a drift term weighted by a constant $\theta \geq 0$ and a diffusion coefficient weighted by $\sigma > 0$. 
In the absence of drift (i.e. $\theta = 0$), the OUP simply becomes a standard Brownian motion (BM). 
This paper is concerned with estimating the mean first-exit time (MFET) of the OUP from a ball of finite radius $L$ for large $d \gg 0$.
We prove that, asymptotically for $d \to \infty$, the OUP takes (on average) no longer to exit than BM. 
In other words, the mean-reverting drift of the OUP (scaled by $\theta \geq 0$) has asymptotically no effect on its MFET. 
This finding might be surprising because, for small $d \in \mathbb{N}$, the OUP exit time is significantly larger than BM by a margin that depends on $\theta$. 
As it allows for the drift to be ignored, it might simplify the analysis of high-dimensional exit-time problems in numerous areas.

Finally, our short proof for the non-asymptotic MFET of OUP, using the Andronov--Vitt--Pontryagin formula, might be of independent interest.
%
\end{abstract}

\section{Introduction}

The $d$-dimensional Ornstein--Uhlenbeck process (OUP) is one of the most important stochastic processes.
It can be defined as a $d$-dimensional Brownian motion (aka.~Wiener process) where a drift is added which is proportional to the displacement from its mean.
It is usually defined as the solution of the following SDE:
\begin{equation} \label{eq:SDE_OUP}
    \mathrm{d}X_t = - \theta X_t \mathrm{d}t + \sigma \mathrm{d}B_t,
\end{equation}
where $\theta \in \mathbb{R}$ and $\sigma > 0$ are the drift and diffusion parameters and $B_t$ is a $d$-dimensional Brownian motion (BM).
Alternatively, if $X_0 = 0$, it can be defined as the unique $d$-dimensional zero-mean Gaussian process with the covariance function 
\begin{equation}
    \operatorname{cov}(X_{i,s},X_{j,t})
    =
    \delta_{ij} \cdot \frac{\sigma^2}{2 \theta} \cdot \left( e^{-\theta \vert t-s \vert} - e^{-\theta (t+s)} \right), 
\end{equation}
where $\delta_{ij}$ is the Kronecker delta and $X_{j,t}$ are the independent components of $X_t = (X_{1,t}, \dots, X_{d,t})$.

The OUP has been extensively studied \citep[Part II, Section 7]{borodin2002handbook}, sometimes with the parameter $\lambda = \frac{\theta}{\sigma^2} \in \mathbb{R}$ instead of $\theta$. 
Many authors define the OUP only for $\theta >0$, while others allow for $\theta \in \mathbb{R}$.
If $\theta \in \mathbb{R}$, one usually distinguishes between \blue{the} positively recurrent case ($\theta > 0$), the Brownian motion case ($\theta = 0$), and the transient case ($\theta < 0$); see p.~137 in \citet{borodin2002handbook}.
Here, we will allow for $\theta \in \mathbb{R}$ and only restrict this range if mathematically necessary. 

In this paper, we are interested in the mean \emph{first exit time} (aka.~first passage time) of a $d$-dimensional OUP from a ball of radius $L$, $B_L(0) = \{ x \in \mathbb{R}^d: \Vert x \Vert_2 < L \}$.
More specifically, we study the expectation $\mathbb{E}^x \tau_L$ of the stopping time $\tau_L = \inf_{t \geq 0} \{\Vert X_t \Vert_2 = L \}$, where the $x$ in $\mathbb{E}^x \tau_L$ indicates that $X_0 \in \R^d$ is chosen such that a.s. $\Vert X_0 \Vert = x$ for some $x \in [0,L]$. 
In particular, we want to understand the asymptotics of $\mathbb{E}^x \tau_L$ \emph{in high dimensions}, i.e.~as $d \to \infty$.

\blue{Such high-dimensional OUPs are important models in numerous applications ares.
For instance in biophysical sciences, high-dimensional OUPs are used to model the membrane potential of multiple neurons \citep{riccardi1979oup,faugeras2009oup}, the phylogenetic dynamics of quantitative traits \citep{butler2004phylogenetic,rohlfs2013modeling}, and single-cell differentiation for multiple cells \citep{matsumoto2016scoup}.
In complex systems, many neurons, traits, or cells have to be modeled~--~leading to high-dimensional OUP models, whose exit times mark when a specific average variation over all components is reached.}

\blue{
High-dimensional OUPs are also important in other scientific fields.
\citet[Section 3.4.]{grebenkov2014first} describes how MFETs matter in algorithmic trading, where the dimensionality depends on the number of different assets in a portfolio (which can get very large, e.g.~for exchange-traded funds). 
In economics, such models can be used to model the wealth of trading agents in an economy \citep{Ciolek2020highdimOUP}. }

\blue{
In non-convex optimization, high-dimensional Langevin equations are used as continuous-time models for stochastic gradient descent methods; these models are Ornstein--Uhlenbeck processes in local minima \citep{Li2017SMEs} whose dimensionality can go up to $d=10^7$ in modern machine-learning problems.
The first exit time of high-dimensional OUPs hence describe the time of escape from a spurious local minimum \citep{nguyen2019first,lucchi2022fOUP}.
}


MFETs of the OUP have of course already been studied to some extent in the literature. We refer the reader to the review paper by \citet{grebenkov2014first} for an overview and detailed literature review. Grebenkov proves a formula (see Eq.~(75) in his paper) for the MFET of a $d$-dimensional OUP for all $d \in \mathbb{N}$. 
Grebenkov's proof makes use of the Fokker-Planck equation and the eigenfunctions of the corresponding backward Laplace operator.
Previously, for the special case of $\sigma = 1$ and $\theta \geq 0$, \citet{Graczyk2008ExitTimesOUP} proved the same formula in their Theorem 2.2 by deriving and solving a suitable boundary value problem (BVP); see their Theorem 2.1.
In this paper, we will re-prove and extend the formulas by \citet{grebenkov2014first} and \citet{Graczyk2008ExitTimesOUP}.
\blue{We emphasize that our proof of Theorem 4 uses a strategy similar to \citet{Graczyk2008ExitTimesOUP}; we refer the reader to our discussion in Remark 5 for a comparison.}

While the previous formulas are \blue{satisfactory} for finite $d \in \mathbb{N}$, our reformulation in terms of the incomplete Gamma function enables us to get the asymptotic scaling for $d \to \infty$.
We will find that, as $d \to \infty$, the MFET of OUP is asymptotically equivalent to that of BM.
\begin{figure}[t]
    \centering
    \includegraphics[width=.23\columnwidth]{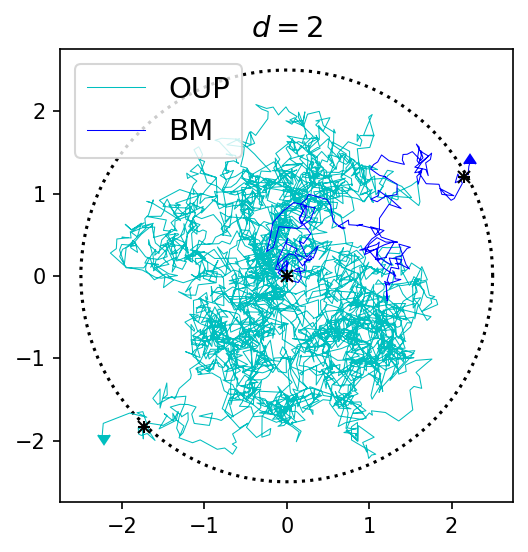}
    \includegraphics[width=.335\columnwidth]{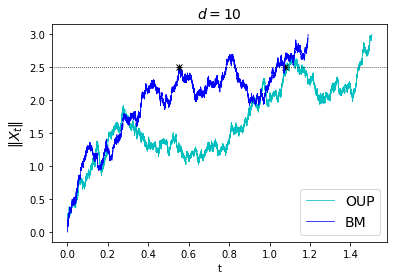}
    \includegraphics[width=.335\columnwidth]{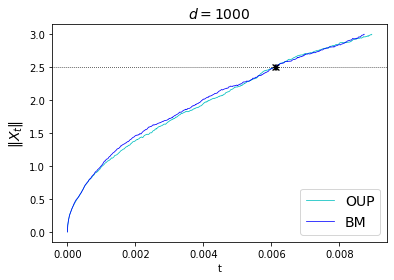}
    \caption{\emph{Motivation of this paper}: the scaling in $d$ of the first-exit time of OUP ($\theta > 0)$ and BM ($\theta = 0$) from a ball of radius $L$. In each of the three plots, a $d$-dim.~OUP and BM are run until they exit a ball of radius $L=2.5$ (visualized by the dotted black line). The point of the first exits is marked by black stars. The three plots are for $d=2$, $d=10$ and $d=1000$ respectively, with parameters $\sigma = 1.0$ and $\theta = 0.7$. On the left $(d=2)$, the entire trajectory is plotted in the 2-$d$ plane, while in the other plots only $\Vert X_t \Vert$ is plotted against time. We can see for $d=2$ how the trajectory of the OUP (i.e. the first-exit time) is much longer than of BM, due to the mean-reverting drift of the OUP. For $d=10$, this effect is already less pronounced, but still significant. For $d=1000$, the first exit times are almost the same. (For even larger $d$, the two lines becomes indistinguishable.) 
    This highlights what we will show in Corollary~\ref{cor:OUPlikeBM}, namely that (on average, for large $d$) OUP takes asymptotically no longer to exit than BM. 
    Note that we can already observe here that the exit times go to zero as $d$ grows, but it is a-priori not obvious how $\theta$ impacts the asymptotic rate (see Section~\ref{sec:discussion} for a discussion).
    Also, see Fig.~\ref{fig:roup_mfet_scaling_in_d}, where the mean asymptotics are depicted for the same parameters of $L$, $\sigma$, and $\theta$.
    }
    \label{fig:illustration_for_intro}
\end{figure}
The initial motivation for this paper is illustrated Fig.~\ref{fig:illustration_for_intro}: as $d$ increases the first exit times of both BM and OUP converge to zero at a rate that does not seem to differ between BM and OUP.
Our theroetical results and experiments (Fig.~\ref{fig:roup_mfet_scaling_in_d}) will confirm this.

\paragraph{Contributions} In this paper, we
\begin{enumerate}
    \item[(i)] reprove the known general formula for the MFET of a $d$-dimensional OUP (Theorem~\ref{thm:fet_roup}) by a shorter proof using the Andronov--Vitt--Pontryagin formula (see Remark~\ref{rmk:relation_to_prior_work} for the relation to prior work) and express this formula in terms of the incomplete Gamma function,
    \item[(ii)] prove asymptotically sharp bounds for the MFET (Theorem~\ref{th:exit_time_bounds_roup}), as $d \to \infty$, by relying on inequalities on the incomplete Gamma function from \citet{Neuman2013InequalitiesGammaFunction}, and
    \item[(iii)] demonstrate that (perhaps surprisingly) the MFET of OUP is asymptotically equal to the one of Brownian motion (Corollary~\ref{cor:OUPlikeBM}).
\end{enumerate}
\blue{Note that, while point (iii) might be initially surprising, it is much less surprising after careful examination; see our Discussion in Section~\ref{sec:discussion}.}

\subsection{Structure of paper}

The remainder of the paper is structured as follows.
First, in Section~\ref{sec:roup}, we explain how the MFET of the $d$-dimensional OUP can be considered as a MFET of the radial Ornstein--Uhlenbeck (rOUP) process.
Second, in Section~\ref{sec:avp_formula}, we introduce the Andronov--Vitt--Pontryagin (AVP) formula and its corresponding boundary value problem for the autonomous case.
Third, in Section~\ref{sec:mfet_oup}, we use both the rOUP and AVP formula to derive an explicit expression for the MFET of OUP and reformulate it in terms of the incomplete Gamma function.
Fourth, in Section~\ref{subsec:scaling_in_d}, we prove asymptotically sharp bounds for MFET (as $d \to \infty$) by exploiting inequalities from \citet{Neuman2013InequalitiesGammaFunction} on the incomplete Gamma function.
Fifth, in Corollary~\ref{cor:OUPlikeBM}, we show that these bounds imply that, as $d \to \infty$, the OUP does (on average) take no longer than Brownian motion to exit balls of arbitrary radius.
Sixth, in Section~\ref{sec:discussion}, we discuss the intuitive reasons and implications of our results.
Finally, we summarize and conclude in Section~\ref{sec:conclusion}.



\section{The radial Ornstein--Uhlenbeck process}
\label{sec:roup}

Let $X_t$ denote a $d$-dimensional OUP process, as defined in Eq.~\eqref{eq:SDE_OUP}.
We are concerned with the first exit time
\begin{equation}\label{eq:def_tau}
    \tau_L := \inf_{t \geq 0} \{\underbrace{\Vert X_t \Vert_2}_{=:\rho_t} = L\}.
\end{equation}
We can see that this stopping time only depends on $X_t$ via $\rho_t := \Vert X_t \Vert_2 = (\sum_{i=1}^d X_{i,t}^2)^{1/2}$.
The stochastic process $\rho_t$ is called the \emph{radial Ornstein--Uhlenbeck process (rOUP)}. It (as well as its square) can be represented by the following It\^o diffusions \citep[Appendices 1.25 and 1.26]{borodin2002handbook}.
\begin{lemma}
    Let $d \in \mathbb{N}$, $\theta \in \R$ and $\sigma > 0$.
    Consider the $d$-dimensional OUP $X_t$ from Eq.~\eqref{eq:SDE_OUP}. 
    Then, $\rho_t^2 = \sum_{i=1}^d X_{i,t}^2$ and $\rho_t = (\sum_{i=1}^d X_{i,t}^2)^{1/2}$ follow the SDEs
    \begin{align}
        \label{eq:sde_rho2}
        \mathrm{d} \rho_t^2  
        = 
        ( {\sigma^2 d} - 2\theta \rho_t^2)\, \mathrm{d}t + 2 \sigma \sqrt{\rho_t^2} \, \mathrm{d}B_t
    \intertext{and}
        \label{eq:sde_rho}
        \mathrm{d}\rho_t
        =
        \left[ \frac{(d-1)\sigma^2}{2\rho_t} - \theta \rho_t \right]\, \mathrm{d} t + \sigma \, \mathrm{d} B_t,
    \end{align}
    respectively.
    (Nb: Here $B_t$ is not the original BM, but another one (see proof). We however use the same symbol to declutter the notation.) 
\end{lemma}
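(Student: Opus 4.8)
The plan is to apply Itô's formula to the smooth functions $f(x) = \Vert x\Vert_2^2 = \sum_{i=1}^d x_i^2$ and (away from the origin) $g(x) = \Vert x\Vert_2 = (\sum_i x_i^2)^{1/2}$, applied to the vector-valued semimartingale $X_t$ solving \eqref{eq:SDE_OUP}, and then to recognize the resulting one-dimensional martingale part as $\sigma\,\mathrm{d}B_t$ for a new scalar Brownian motion $B_t$ via Lévy's characterization.

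First I would compute, for $\rho_t^2 = f(X_t)$, the partial derivatives $\partial_i f = 2 x_i$ and $\partial_i\partial_j f = 2\delta_{ij}$. Itô's formula gives
\begin{equation*}
  \mathrm{d}\rho_t^2 = \sum_{i=1}^d 2 X_{i,t}\,\mathrm{d}X_{i,t} + \tfrac12 \sum_{i=1}^d 2\,\mathrm{d}\langle X_{i},X_{i}\rangle_t .
\end{equation*}
Since each component satisfies $\mathrm{d}X_{i,t} = -\theta X_{i,t}\,\mathrm{d}t + \sigma\,\mathrm{d}B_{i,t}$ with the $B_{i,t}$ independent BMs, we have $\mathrm{d}\langle X_i,X_i\rangle_t = \sigma^2\,\mathrm{d}t$, so the drift becomes $-2\theta \sum_i X_{i,t}^2\,\mathrm{d}t + \sigma^2 d\,\mathrm{d}t = (\sigma^2 d - 2\theta\rho_t^2)\,\mathrm{d}t$, and the martingale part is $2\sigma \sum_i X_{i,t}\,\mathrm{d}B_{i,t}$. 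This martingale has quadratic variation $4\sigma^2\sum_i X_{i,t}^2\,\mathrm{d}t = 4\sigma^2\rho_t^2\,\mathrm{d}t$, so by Lévy's characterization $\widetilde B_t := \int_0^t \frac{\sum_i X_{i,s}\,\mathrm{d}B_{i,s}}{\rho_s}$ is a standard scalar BM (on the set where $\rho_s>0$, which for $d\ge 2$ is a.s.\ all $s>0$), and $2\sigma\sum_i X_{i,t}\,\mathrm{d}B_{i,t} = 2\sigma\sqrt{\rho_t^2}\,\mathrm{d}\widetilde B_t$. Relabeling $\widetilde B$ as $B$ yields \eqref{eq:sde_rho2}. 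For \eqref{eq:sde_rho} I would apply Itô to $h(u)=\sqrt{u}$ composed with $\rho_t^2$, using $h'(u) = \tfrac12 u^{-1/2}$, $h''(u) = -\tfrac14 u^{-3/2}$: the drift becomes $\tfrac{1}{2\rho_t}(\sigma^2 d - 2\theta\rho_t^2) - \tfrac18 \rho_t^{-3}\cdot 4\sigma^2\rho_t^2 = \tfrac{(d-1)\sigma^2}{2\rho_t} - \theta\rho_t$, and the diffusion coefficient becomes $\tfrac{1}{2\rho_t}\cdot 2\sigma\rho_t = \sigma$, giving $\mathrm{d}\rho_t = [\tfrac{(d-1)\sigma^2}{2\rho_t} - \theta\rho_t]\,\mathrm{d}t + \sigma\,\mathrm{d}B_t$. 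Alternatively one can apply Itô directly to $g(x)=\Vert x\Vert$, computing $\partial_i g = x_i/\Vert x\Vert$ and $\partial_i\partial_j g = \delta_{ij}/\Vert x\Vert - x_i x_j/\Vert x\Vert^3$, whose trace contracted with $\sigma^2 I$ is $\sigma^2(d-1)/\Vert x\Vert$; this is the quicker route.

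The main technical obstacle is the singularity at the origin: both $g$ and $h$ are not $C^2$ at $0$, so Itô's formula cannot be applied globally. For $d\ge 2$ the Bessel-type process $\rho_t$ almost surely never hits $0$ (it behaves locally like a Bessel process of dimension $d\ge 2$), so one can localize on stopping times $\tau_\epsilon = \inf\{t: \rho_t \le \epsilon\}$, derive the SDE on $[0,\tau_\epsilon]$, and let $\epsilon\downarrow 0$ using $\tau_\epsilon \uparrow \infty$ a.s.; for $d=1$ the statement \eqref{eq:sde_rho} must be interpreted with a local-time/reflection term or restricted to excursions away from $0$, which I would handle by citing \citet[Appendices 1.25–1.26]{borodin2002handbook} rather than reproving. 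A minor additional point is verifying the non-explosion and pathwise-uniqueness needed to speak of ``the'' SDEs; these follow from standard linear-growth and local-Lipschitz arguments on the drift coefficient away from the origin.
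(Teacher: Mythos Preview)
Your proposal is correct and follows essentially the same route as the paper: apply It\^o's formula to $\Vert X_t\Vert^2$, identify the one-dimensional martingale part as a new Brownian motion via L\'evy's characterization, and then apply It\^o to $u\mapsto\sqrt{u}$ to pass from $\rho_t^2$ to $\rho_t$. Your treatment is in fact more careful than the paper's in addressing the singularity at the origin and the $d=1$ case, which the paper's proof does not discuss.
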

\begin{proof}
    First, note that the components of $X_t = [X_{1,t}, \dots, X_{d,t}]$ are independent OUPs of the form 
    \begin{equation} \label{eq:sde_onedOUPs}
        \mathrm{d}X_{i,t}
        =
        -\theta X_{i,t}  +  \sigma \, \mathrm{d} B_{i,t}.
    \end{equation}
    Then, by $\rho_t^2 = \sum_{i=1}^d X_{i,t}^2$, application of It\^o's lemma yields
    \begin{align} \label{eq:sde_radialOU}
        \mathrm{d} \rho_t^2
        &=
        2 X_t^{\intercal} \, \mathrm{d}X_t
        +
        \frac12 \operatorname{tr} ( 2 I_d \overbrace{[dX_t dX_t^\intercal]}^{=\sigma^2 I_d \mathrm{d}t} )
        \\
        &=
        2 X_t^\intercal \mathrm{d}X_t + {\sigma^2 d} \, \mathrm{d}t.
        \label{eq:drt2}
    \end{align}
    The remaining term to compute is
    \begin{equation}
        X_t^\intercal \mathrm{d}X_t
        \, \overset{\eqref{eq:sde_onedOUPs}}{=} \,
        - \theta \rho_t^2 \, \mathrm{d}t
        +
        \sigma \sum_{i=1}^d X_{t,i} \, \mathrm{d}B_t^{(i)}
        \, = \,
        - \theta \rho_t^2 \, \mathrm{d}t
        +
        \sigma \sqrt{\rho_t^2} \, \mathrm{d}Y_t ,
        \label{eq:xttopdxt}
    \end{equation}
    where
    \begin{align}
        Y_t
        :=
        \sum_{i=1}^d
        \int_0^t
        \frac
        { X_{i,s} }
        {\sqrt{ \rho_t^2 }}
        \, \mathrm{d}B_s^{(i)}.
    \end{align}
    But $Y_t$ is a continuous martingale with quadratic variation $[Y,Y]_t = t$. This means, by L{\'e}vy's characterization of BM, that $Y_t$ is another Brownian motion. To declutter notation, we denote $Y_t$ by $B_t$, too.
    Now, insertion of Eq.~\eqref{eq:xttopdxt} into Eq.~\eqref{eq:drt2} yields Eq.~\eqref{eq:sde_rho2}.

    For the missing Eq.~\eqref{eq:sde_rho}, we observe that $\rho_t = \sqrt{\rho_t^2}$. Then, application of It\^o's lemma and insertion of Eq.~\eqref{eq:sde_rho2} gives Eq.~\eqref{eq:sde_rho}.
\end{proof}
Now, we have derived the SDE for the rOUP in Eq.~\eqref{eq:sde_rho}.
To exploit it, we will need to solve its Andronov--Vitt--Pontryagin formula which we will introduce next.

\section{The Andronov--Vitt--Pontryagin formula}
\label{sec:avp_formula}

The Andronov--Vitt--Pontryagin (AVP) formula is a formula to compute the expected exit times of an It\^o diffusion by use of its Kolmogorov backward operator.

Consider the It\^o SDE
\begin{equation} \label{eq:generic_sde}
    \mathrm{d} X_t = a(X_t,t) \, \mathrm{d}t + b(X_t,t) \, \mathrm{d}B_t,
\end{equation}
where $a: \R^d \times [0, T] \to \R^d$, $b: \R^d \times [0, T] \to \R^{d \times n}$, and $B_t$ is an $n$-dimensional Brownian motion.
The associated \emph{backward Kolmogorov operator} is defined as
\begin{equation}
L_x^\ast u(x,t) = \sum_{i=1}^n a_i(t) \frac{\partial u(x,t)}{\partial x^i} + \frac12 \sum_{i=1}^n \sum_{j=1}^n [b(X_t,t)b(X_t,t)^{\intercal}]_{ij} \frac{\partial^2 u(x,t)}{\partial x^i \partial x^j}.
\end{equation}

For any bounded domain $D \subset \R^d$, we define the first exit time from this domain:
\begin{equation} \label{def:rhoD}
    \pi_D := \inf \{ t > 0 \mid x(t) \not\in D \}.
\end{equation}
Note that, if $D$ is equal to a centered ball $B_L(0)$ of radius $L$, then $\pi_D = \tau_L$.
The next theorem characterizes the expected value of $\pi_D$.
\begin{theorem}[The Andronov--Vitt--Pontryagin formula] \label{thm:avp_formula}
    Let $D \subset \R^d$ be a bounded set, and assume that the following boundary value problem
    \begin{align} \label{avp:pde}
        \begin{split}
        \frac{\partial u(x,t)}{\partial t} + L^{\ast}_x u(x,t)
        &=
        -1, \quad \text{for } x \in D \text{ and } t \in \R, \\
        u(x,t)
        &=
        0, \text{ for } x \in \partial D,
        \end{split}
    \end{align}
    has a unique bounded solution $u(x,t)$.
    Then, the mean first passage time $\mathbb{E}^x \pi_D$ is finite and takes the form
    \begin{equation}
        \mathbb{E}^x \pi_D = u(x,0).
    \end{equation}
\end{theorem}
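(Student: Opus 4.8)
The plan is to verify that $v(x,t) := \mathbb{E}^x \pi_D$ (the thing we want to identify) equals the given $u(x,0)$ by applying Itô's formula (Dynkin's formula) to the hypothesized solution $u$ along the stopped diffusion, and then exploiting the PDE and boundary condition to collapse everything. I would first reduce to the \emph{autonomous} / time-homogeneous situation that actually occurs in this paper: here the coefficients $a,b$ do not depend on $t$ (the rOUP in Eq.~\eqref{eq:sde_rho} has time-independent drift and diffusion), so one looks for a time-independent solution $u(x,t) \equiv u(x)$, and the BVP \eqref{avp:pde} becomes $L_x^\ast u = -1$ on $D$ with $u = 0$ on $\partial D$. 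I would state this reduction explicitly, since it is the form used in Section~\ref{sec:mfet_oup}.

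The core computation: fix $x \in D$, let $X_t$ solve \eqref{eq:generic_sde} with $X_0 = x$, and apply Itô's formula to $u(X_t)$ up to the stopping time $\pi_D \wedge n$ for an integer $n$. Because $u$ solves $L_x^\ast u = -1$ on $D$ and $X_{t}$ stays in $D$ for $t < \pi_D$, the drift part of $\mathrm{d}u(X_t)$ is exactly $-\mathrm{d}t$, so
\begin{equation}
u(X_{\pi_D \wedge n}) = u(x) - (\pi_D \wedge n) + \int_0^{\pi_D \wedge n} \nabla u(X_s)^\intercal b(X_s)\, \mathrm{d}B_s .
\end{equation}
Since $u$ is bounded and (by elliptic regularity of the BVP solution) has bounded gradient on the closure of the bounded set $D$, the stochastic integral is a true martingale with zero expectation, so taking expectations gives
\begin{equation}
\mathbb{E}^x u(X_{\pi_D \wedge n}) = u(x) - \mathbb{E}^x[\pi_D \wedge n].
\end{equation}
Now I would let $n \to \infty$: monotone convergence gives $\mathbb{E}^x[\pi_D \wedge n] \uparrow \mathbb{E}^x \pi_D$, and boundedness of $u$ together with dominated convergence handles the left side, giving $\mathbb{E}^x u(X_{\pi_D}) = u(x) - \mathbb{E}^x \pi_D$. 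Since this already forces $\mathbb{E}^x \pi_D \le \sup|u| + \sup|u| < \infty$, we learn $\pi_D < \infty$ a.s., hence $X_{\pi_D} \in \partial D$ and $u(X_{\pi_D}) = 0$ by the boundary condition. Therefore $\mathbb{E}^x \pi_D = u(x)$, which in the time-dependent notation is $u(x,0)$.

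The main obstacle — and the place where hypotheses have to be invoked carefully rather than waved through — is the interchange of limit and expectation needed to pass from the stopped ($\pi_D \wedge n$) identity to the unstopped one, i.e.\ establishing a priori that $\mathbb{E}^x \pi_D < \infty$ and that the martingale term genuinely has zero mean. For a bounded domain $D$ with a uniformly elliptic diffusion this finiteness is classical (one can also get it directly: $L_x^\ast$ applied to a suitable quadratic comparison function is bounded below by a positive constant on $D$, which caps the exit time), but since the theorem is stated under the sole assumption that the BVP has a unique bounded solution, I would either (a) note that a bounded classical solution $u$ of $L^\ast_x u = -1$ itself serves as the comparison function making the argument self-contained, or (b) add the standing nondegeneracy/regularity assumptions under which $u \in C^2(D) \cap C(\bar D)$ with bounded first derivatives, so that the stochastic integral is a genuine martingale. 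I would flag this and proceed with option (a), since it keeps the proof short and matches the paper's stated hypothesis. The rest is routine: Itô's formula, optional stopping, and two applications of monotone/dominated convergence.
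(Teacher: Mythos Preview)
The paper does not actually give a proof of this theorem: its entire ``proof'' is the one-line citation ``See Theorem 4.4.3.\ in \citet{Schuss2010_StochasticProcesses}.'' Your proposal, by contrast, supplies the standard Dynkin--It\^o argument (apply It\^o's formula to $u(X_t)$, stop at $\pi_D \wedge n$, take expectations, pass to the limit, use the boundary condition), which is correct in its essentials and is almost certainly what one would find behind that citation. So there is no discrepancy to flag; you have simply written out what the paper outsources.

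One minor remark: the theorem as stated allows time-dependent coefficients and a time-dependent $u(x,t)$, but you immediately reduce to the autonomous case. That reduction is exactly what the paper needs (and is the content of Corollary~\ref{corollary:AVP_autonomous}), so for the purposes of this paper your argument is complete. If you wanted to match the generality of the stated theorem, the only change is to apply It\^o's formula to the space--time process $t \mapsto u(X_t,t)$, so that the $\partial_t u$ term appears alongside $L_x^\ast u$ and the full equation $\partial_t u + L_x^\ast u = -1$ is used; everything else is identical.
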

\begin{proof}
    See Theorem 4.4.3. in \citet{Schuss2010_StochasticProcesses}.
\end{proof}
Fortunately, in the autonomous case, the PDE \eqref{avp:pde} turns into an ODE which can be solved analytically in many cases.
\begin{corollary}[The Andronov--Vitt--Pontryagin boundary value problem for the autonomous case]
    \label{corollary:AVP_autonomous}
    Under the assumptions of Theorem \ref{thm:avp_formula}, if the coefficients $a$ and $b$ from Eq.~\eqref{eq:generic_sde} are independent of $t$, the solution $u$ of Eq.~\eqref{avp:pde} is also independent of $t$ and therefore solves the ODE
    \begin{align} \label{avp:ode}
        \begin{split}
            L^{\ast}_x u(x) &= - 1, \quad \text{for } x \in D. \\
            u(x) &= 0, \text{ for } x \in \partial D,
        \end{split}
    \end{align}
    In particular, $\mathbb{E}^x \pi_D = u(x)$.
\end{corollary}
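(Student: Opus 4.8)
The plan is to prove Corollary~\ref{corollary:AVP_autonomous} by reducing the time-dependent boundary value problem \eqref{avp:pde} to the stationary one \eqref{avp:ode}, using the uniqueness hypothesis already built into Theorem~\ref{thm:avp_formula}. The key observation is that when $a$ and $b$ do not depend on $t$, the operator $L^\ast_x$ also has no explicit time dependence, so a function of $x$ alone is a natural candidate solution. Concretely, I would first invoke Theorem~\ref{thm:avp_formula} to obtain the unique bounded solution $u(x,t)$ of \eqref{avp:pde}, together with the identity $\mathbb{E}^x \pi_D = u(x,0)$. Then I would let $v(x)$ denote the solution of the ODE boundary value problem \eqref{avp:ode} (whose existence I either assume or obtain from the same autonomous structure), and verify directly that $\tilde u(x,t) := v(x)$ solves \eqref{avp:pde}: indeed $\partial_t \tilde u = 0$, so $\partial_t \tilde u + L^\ast_x \tilde u = L^\ast_x v(x) = -1$ on $D$, and $\tilde u(x,t) = v(x) = 0$ on $\partial D$; moreover $\tilde u$ is bounded because $v$ is (being the solution of an ODE on a bounded domain with bounded data, or simply by assumption).

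The second and final step is to apply the uniqueness clause of Theorem~\ref{thm:avp_formula}: since \eqref{avp:pde} has a \emph{unique} bounded solution and both $u(x,t)$ and $\tilde u(x,t) = v(x)$ are bounded solutions, we must have $u(x,t) = v(x)$ for all $t$. In particular $u$ is independent of $t$, and $\mathbb{E}^x \pi_D = u(x,0) = v(x)$, which is exactly the claim. This completes the argument.

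The main (and really only) subtlety is a bookkeeping one: one must make sure that the existence of a solution to the ODE \eqref{avp:ode} is accounted for, since a priori Theorem~\ref{thm:avp_formula} only guarantees existence for the PDE. The cleanest route, which I would take, is to not separately assume ODE solvability but rather to note that any bounded solution of the PDE \eqref{avp:pde} must in fact be $t$-independent — this follows because, for any fixed shift $h$, $u(x,t+h)$ is also a bounded solution of \eqref{avp:pde} (the equation being autonomous in $t$), hence by uniqueness $u(x,t+h) = u(x,t)$ for all $h$, so $u$ depends on $x$ only; setting $v(x) := u(x,\cdot)$ then automatically satisfies \eqref{avp:ode}. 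With this framing there is no genuine obstacle — the corollary is essentially an immediate consequence of the autonomy of the coefficients plus the uniqueness already postulated in Theorem~\ref{thm:avp_formula}, and the proof is a few lines long.
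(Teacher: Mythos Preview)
Your argument is correct and complete: the time-shift trick (observing that $u(x,t+h)$ is again a bounded solution of \eqref{avp:pde} whenever the coefficients are autonomous, and then invoking the uniqueness hypothesis of Theorem~\ref{thm:avp_formula} to conclude $u(x,t+h)=u(x,t)$ for all $h$) is exactly the right way to extract $t$-independence without separately assuming solvability of the ODE. Once $u$ is known to depend only on $x$, the reduction of \eqref{avp:pde} to \eqref{avp:ode} and the identity $\mathbb{E}^x\pi_D = u(x)$ are immediate.

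As for comparison with the paper: the paper does not actually prove this corollary at all --- it simply refers the reader to Corollary~4.4.1 in \citet{Schuss2010_StochasticProcesses}. Your proposal therefore supplies a genuine, self-contained argument where the paper gives only a citation. The advantage of your route is that it stays entirely within the framework already set up by Theorem~\ref{thm:avp_formula} (autonomy plus uniqueness), requiring no additional machinery; the advantage of the paper's route is brevity and deference to a standard reference.
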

\begin{proof}
    See Corollary 4.4.1. in \citet{Schuss2010_StochasticProcesses}.
\end{proof}
We can now apply the above Corollary to the SDE \eqref{eq:sde_rho} of the radial OUP. We will be able to solve the resulting ODE \eqref{avp:ode} and thereby obtain an exact formula for $\mathbb{E}^x \tau_L$.

\section{Mean first exit time of the $d$-dimensional OUP}
\label{sec:mfet_oup}

\begin{theorem} \label{thm:fet_roup}
    Let $L>0$ and let $x \in [0,L]$ such that a.s.~$\Vert X_0 \Vert = x$.
    Denote $\lambda := \frac{\theta}{\sigma^2}$.
    Then, for all $\sigma > 0$ and $\lambda \in \mathbb{R}$, we have that
    \begin{equation} \label{eq:fet_roup}
        \mathbb{E}^x \tau_L  =
        \frac{2}{\sigma^2} \int_x^L z^{1-d} e^{\lambda z^2}\left[ \int_0^z t^{d-1} e^{-\lambda t^2} \, \mathrm{d}t \right] \, \mathrm{d}z.
    \end{equation}
    If $\lambda > 0$, then this formula further simplifies to
    \begin{equation} \label{eq:fet_roup_with_gamma}
        \mathbb{E}^x \tau_L  =
        \frac{1}{\lambda^{\frac d2} \sigma^2} \int_x^L z^{1-d} e^{\lambda z^2} \gamma(d/2,\lambda z^2 ) \, \mathrm{d}z,
    \end{equation}
    where the function $\gamma$ denotes the upper incomplete Gamma function
    \begin{equation} \label{eq:def_gamma}
        \gamma(n,y)  :=
        \int_{0}^{y} t^{n-1} \exp(-t) \ \mathrm{d}t.
    \end{equation}
\end{theorem}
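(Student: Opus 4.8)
The plan is to apply Corollary~\ref{corollary:AVP_autonomous} to the rOUP SDE~\eqref{eq:sde_rho}. Since the coefficients $a(\rho) = \frac{(d-1)\sigma^2}{2\rho} - \theta\rho$ and $b(\rho) = \sigma$ are time-independent and one-dimensional, the backward Kolmogorov operator on the interval $D = (0,L)$ (well, on $(-L,L)$, but the dynamics of $\rho_t$ keep it positive, and by radial symmetry we work with $\rho \in (0,L)$) reads
\begin{equation*}
    L^\ast_\rho u(\rho) = \left[ \frac{(d-1)\sigma^2}{2\rho} - \theta\rho \right] u'(\rho) + \frac{\sigma^2}{2} u''(\rho).
\end{equation*}
So the task reduces to solving the ODE $L^\ast_\rho u = -1$ on $(0,L)$ with boundary condition $u(L) = 0$ (plus a regularity/boundedness condition at $\rho = 0$, which plays the role of the second boundary condition since $\rho = 0$ is an entrance boundary — this is the subtle point, see below). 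First I would divide through by $\sigma^2/2$ to get $u'' + \left( \frac{d-1}{\rho} - 2\lambda\rho \right) u' = -\frac{2}{\sigma^2}$, writing $\lambda = \theta/\sigma^2$.

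Next I would solve this linear first-order ODE in $v := u'$ by the integrating-factor method. The homogeneous part has integrating factor $\mu(\rho) = \exp\!\left( \int \left( \frac{d-1}{\rho} - 2\lambda\rho \right) d\rho \right) = \rho^{d-1} e^{-\lambda\rho^2}$. Multiplying through, $\frac{d}{d\rho}\!\left( \rho^{d-1} e^{-\lambda\rho^2} v(\rho) \right) = -\frac{2}{\sigma^2}\rho^{d-1} e^{-\lambda\rho^2}$, hence
\begin{equation*}
    \rho^{d-1} e^{-\lambda\rho^2} v(\rho) = C - \frac{2}{\sigma^2} \int_0^\rho t^{d-1} e^{-\lambda t^2}\, dt.
\end{equation*}
Here is where I pin down the constant $C$: for $u'$ to stay bounded (equivalently, for $u$ to be a bona fide bounded solution) near $\rho = 0$, the left side vanishes as $\rho \to 0^+$ like $\rho^{d-1}$, so we need $C = 0$; the $\int_0^\rho$ term vanishes automatically. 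This gives $v(\rho) = u'(\rho) = -\frac{2}{\sigma^2}\,\rho^{1-d} e^{\lambda\rho^2} \int_0^\rho t^{d-1} e^{-\lambda t^2}\, dt$. Then I integrate once more from $x$ to $L$ using $u(L) = 0$:
\begin{equation*}
    u(x) = u(L) - \int_x^L u'(z)\, dz = \frac{2}{\sigma^2} \int_x^L z^{1-d} e^{\lambda z^2} \left[ \int_0^z t^{d-1} e^{-\lambda t^2}\, dt \right] dz,
\end{equation*}
which is exactly~\eqref{eq:fet_roup}, and by Corollary~\ref{corollary:AVP_autonomous} this equals $\mathbb{E}^x \tau_L$. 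I should also check (or invoke) that this $u$ is indeed bounded on $[0,L]$ so that Theorem~\ref{thm:avp_formula}'s hypothesis holds: the inner integral is $O(z^d)$ as $z \to 0$, so the integrand $z^{1-d}e^{\lambda z^2} \cdot O(z^d) = O(z)$ is continuous up to $0$, and everything is continuous on the compact $[x,L]$, so finiteness is immediate.

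For the second formula~\eqref{eq:fet_roup_with_gamma}, when $\lambda > 0$ I would substitute $t = s/\sqrt{\lambda}$ (so $s = \sqrt{\lambda}\,t$) in the inner integral: $\int_0^z t^{d-1} e^{-\lambda t^2}\, dt$. Actually the cleaner substitution is $s = \lambda t^2$, $ds = 2\lambda t\, dt$, giving $\int_0^z t^{d-1} e^{-\lambda t^2}\, dt = \frac{1}{2\lambda^{d/2}} \int_0^{\lambda z^2} s^{d/2 - 1} e^{-s}\, ds = \frac{1}{2\lambda^{d/2}}\gamma(d/2, \lambda z^2)$ by the definition~\eqref{eq:def_gamma}. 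Plugging this into~\eqref{eq:fet_roup} and absorbing the factor $\frac{2}{\sigma^2} \cdot \frac{1}{2\lambda^{d/2}} = \frac{1}{\lambda^{d/2}\sigma^2}$ yields~\eqref{eq:fet_roup_with_gamma}.

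The main obstacle is not the computation but the justification of the boundary condition at $\rho = 0$: the drift coefficient $\frac{(d-1)\sigma^2}{2\rho}$ is singular there, so $\rho = 0$ is not a classical boundary point of the domain, and one must argue that the correct side condition is boundedness of $u$ (or of $u'$) rather than $u(0) = 0$. The rigorous route is either (i) to invoke the classification of $\rho = 0$ as an entrance (non-attainable) boundary for the rOUP diffusion when $d \geq 2$ — so the process started at $x > 0$ never reaches $0$ and the exit problem genuinely lives on $(0, L)$ with a single Dirichlet condition at $L$ — or (ii) to note that the full $d$-dimensional problem on the ball $B_L(0)$ has a genuinely smooth radial solution $u(\|x\|)$, and that smoothness of $u$ at the origin forces precisely the vanishing of the constant $C$ above. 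I would take route (ii) as it is self-contained: a radial $C^2$ function on $B_L(0)$ must have vanishing radial derivative at $0$, which is exactly $u'(\rho) \to 0$, forcing $C = 0$. Once this is settled the rest is routine, and I would also remark (cf.~Remark~\ref{rmk:relation_to_prior_work}) that this recovers the formulas of \citet{Graczyk2008ExitTimesOUP} and \citet{grebenkov2014first} but avoids their heavier Fokker--Planck / eigenfunction machinery.
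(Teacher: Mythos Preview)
Your proposal is correct and follows the same overall strategy as the paper: reduce to the one-dimensional AVP boundary value problem~\eqref{eq:AVP_BVP_rho_t} for the radial process and solve it explicitly. The execution differs in two respects. First, the paper \emph{verifies} that the right-hand side of~\eqref{eq:fet_roup} satisfies the ODE by direct differentiation and then appeals to a Picard--Lindel\"of argument for uniqueness, whereas you \emph{derive} the formula constructively via the integrating factor $\mu(\rho)=\rho^{d-1}e^{-\lambda\rho^2}$; your route is slightly more transparent about where the formula comes from. Second, and more substantively, the paper does not isolate the singular boundary at $\rho=0$ as you do: it treats uniqueness on $[x,L]$ for fixed $x>0$ via Picard--Lindel\"of and then passes to $x=0$ by a separate strong-Markov limiting argument $\mathbb{E}^0\tau_L=\lim_{l\to 0}(\mathbb{E}^0\tau_l+\mathbb{E}^l\tau_L)$. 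Your approach instead fixes the free constant $C$ by requiring $u'(\rho)$ to remain bounded (equivalently, radial $C^2$-smoothness at the origin), which is the standard way to select the physically relevant solution at an entrance boundary and arguably addresses more directly the ``unique bounded solution'' hypothesis of Theorem~\ref{thm:avp_formula}. The derivation of~\eqref{eq:fet_roup_with_gamma} via $s=\lambda t^2$ is identical to the paper's.
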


\begin{remark}[Relation to prior work]
    Theorem~\ref{thm:fet_roup} gives an exact representation of the desired MFET of the OUP for all $d \in \mathbb{N}$.
    Note that Theorem~\ref{thm:fet_roup} is already \emph{partially known}. 
    While both the physics and mathematics literature have independently derived variations of  Eq.~\eqref{eq:fet_roup}, our reformulation with the incomplete Gamma function \eqref{eq:fet_roup_with_gamma} is new.
    Importantly, this will enable us to derive novel lower and upper bounds on the MFET (see Theorem~\ref{th:exit_time_bounds_roup}).
    
    In physics, \citet{grebenkov2014first} derives our formula \eqref{eq:fet_roup} in his Eq.~(75), but in a different parametrization and with a different proof using the eigenfunctions of the backward Laplace operator.

    In mathematics, \citet{Graczyk2008ExitTimesOUP}, in their Theorem 2.2, also prove our formula \eqref{eq:fet_roup} with a similar strategy, but only for $\sigma = 1$ and $\theta \geq 0$ (called $\lambda$ in their notation). (Note that they use hypergeometric functions to express the integral in Eq.~\eqref{eq:fet_roup}.)
    Their proof strategy is similar to ours: our Corollary~\ref{corollary:AVP_autonomous} corresponds to their Theorem 2.1, which they derive by different means than \citet{Schuss2010_StochasticProcesses} used for our Corollary~\ref{corollary:AVP_autonomous}.
    Accordingly, our BVP~\eqref{eq:AVP_BVP_rho_t} appears on page 320 of \citet{Graczyk2008ExitTimesOUP}. 
    Hence, one can think of our Eq.~\eqref{eq:fet_roup} as an extension of Theorem 2.1 in \citet{Graczyk2008ExitTimesOUP} to all $\sigma > 0$ and all $\theta \in \R$, and of our proof as a shorter version of theirs.
    
    While our Eq.~\eqref{eq:fet_roup_with_gamma} is easily derived from Eq.~\eqref{eq:fet_roup} by a change of variable (see proof), it will be essential to prove the scaling for $d \to \infty$ below.
    Eq.~\eqref{eq:fet_roup_with_gamma} is thus an essential part of our analysis.
    \label{rmk:relation_to_prior_work}
\end{remark}

\begin{proof}[Proof of Theorem \ref{thm:fet_roup}]
    We first prove Eq.~\eqref{eq:fet_roup} and then Eq.~\eqref{eq:fet_roup_with_gamma}.
    The proof of Eq.~\eqref{eq:fet_roup} is split into three parts.
    First, we fix $x>0$ and show that the right-hand side of Eq.~\eqref{eq:fet_roup} solves the Andronov--Vitt--Pontryagin BVP, Eq.~\eqref{avp:ode}, associated with the radial Ornstein--Uhlenbeck process $\rho_t$, with $\rho_0 = x$.
    Second, we show that the solution to this BVP is unique.
    (Together, the first two steps imply that Eq.~\eqref{eq:fet_roup} holds for all $x>0$.)
    Third, we will show that it also holds for $x=0$ which will conclude the proof.

    We now go through the proof step by step.

    \emph{First step:} By the definition of the radial OU process $\rho_t = (\sum_{i=1}^d X_{i,t}^2)^{1/2}$, we have $\tau_L = \inf \{ t>0 : \rho_t = L \}$.
    Hence, by the SDE \eqref{eq:sde_rho} of $\rho_t$, the associated Andronov--Vitt--Pontryagin BVP \eqref{avp:ode} reads
    \begin{equation*}
        \left[ \frac{(d-1)\sigma^2}{2x} - \theta x \right] u^{\prime}(x) + \frac{\sigma^2}{2} u^{\prime \prime}(x)
        =
        -1,
        \qquad \text{ with } u(L) = 0,
    \end{equation*}
    or equivalently
    \begin{equation} \label{eq:AVP_BVP_rho_t}
        u^{\prime \prime}(x) 
        =
        \left[ 2 \lambda x - \frac{d-1}{x} \right] u^{\prime}(x) - \frac{2}{\sigma^2},
        \qquad \text{ with } u(L) = 0. 
    \end{equation}
    (Note that we here used that $\pi_{B_L(0)} = \tau_L$ by construction in Eq.~\eqref{def:rhoD}, so that we could apply the Andronov--Vitt--Pontryagin formula.)
    We will now show that the right-hand side of Eq.~\eqref{eq:fet_roup} solves the above equation, that is
    \begin{equation} \label{eq:u_eq_rhs}
        u(x)
        :=
        \frac{2}{\sigma^2} \int_x^L z^{1-d} e^{\lambda  z^2} \left[ \int_0^z t^{d-1} e^{-\lambda } \,\mathrm{d}t  \right] \,\mathrm{d}z.
    \end{equation}
    Its derivatives are
    \begin{align}
        u^{\prime}(x)
        &=
        -\frac{2}{\sigma^2} x^{1-d} e^{\lambda  x^2} I(x), \\
        u^{\prime \prime}(x)
        &= 
        -\frac{2}{\sigma^2} \left[ (1-d)x^{-d} e^{\lambda x^2} + 2 \lambda  x^{2-d} e^{\lambda x^2} \right] I(x)  -  \frac{2}{\sigma^2},
    \intertext{where}
        I(x) :&= \left[ \int_0^x t^{d-1}  e^{-\lambda  t^2} \,\mathrm{d}t \right].
    \end{align} 

    Insertion of these formulas yields Eq.~\eqref{eq:AVP_BVP_rho_t}:
    \begin{align} 
        \left[ 2 \lambda  x - \frac{d-1}{x} \right] u^{\prime}(x) - \frac{2}{\sigma^2}
        &=
        \left[ 2 \lambda  x - \frac{d-1}{x} \right]
        \left[-\frac{2}{\sigma^2} x^{1-d} e^{\lambda  x^2}\right] I(x) - \frac{2}{\sigma^2} \\
        &=
        - \frac{2}{\sigma^2} \left[ (1-d)x^{-d} e^{\lambda x^2} + 2 \lambda x^{2-d} e^{\lambda x^2} \right] I(x) - \frac{2}{\sigma^2}
        =
        u^{\prime \prime}(x).
    \end{align}

    \emph{Second step:} 
    To prove that $u$ from Eq.~\eqref{eq:u_eq_rhs} is the \emph{unique} solution to the second-order ODE \eqref{eq:AVP_BVP_rho_t} for any $x>0$, we recast it as a first-order ODE
    \begin{equation}
        \frac{ \mathrm d}{\mathrm d x} \begin{bmatrix} u_1(x) \\ u_2(x) \end{bmatrix}
        =
        \begin{bmatrix} u_2(x) \\  \left[ 2\lambda x - \frac{d-1}{x} \right] u_1(x) - \frac{2}{\sigma^2}  \end{bmatrix}
        =:
        f\left(x,\begin{bmatrix} u_1(x) \\ u_2(x) \end{bmatrix}\right).
    \end{equation}
    \blue{Now, on the domain $\tilde{x} \in [x,L]$, the vector field $f(\cdot,[u_1,u_2]^{\intercal})$, has Lipschitz uniformly bounded Lipschitz constants: 
    \begin{align}
    \begin{split}
        L(\tilde{x}) :&= \sup_{u \neq v \in \R^{2d}} \frac{\Vert f(\tilde{x},u) - f(\tilde{x},v) \Vert}{\Vert u - v \Vert} 
        = \min \left\{1, \vert 2 \lambda \tilde{x} - (d-1)/x \vert \right\} 
        \leq \left \vert 2 \lambda \tilde{x} - (d-1)/x \right \vert \\
        &\leq
        2 \lambda L + (d-1)/x < \infty, \qquad \forall \tilde{x} \in [x,L].
    \end{split}
    \end{align}
    Hence, the following global Lipschitz condition required for a global-Lipschitz version of Picard--Lindel\"of's theorem \citep[Corollary 2.6.]{Teschl2012} is satisfied for any choice of $x > 0$:
    \begin{equation}
        \int_x^L L(\tilde{x}) \, \mathrm{d}\tilde{x}  \leq
        (L-x) \cdot ( 2\lambda L + (d-1)/x ) < \infty
    \end{equation}
    Thus, this version of Picard--Lindel\"of's theorem yields that $u(x)$ is the unique solution of Eq.~\eqref{eq:AVP_BVP_rho_t}, for any fixed choice of $x>0$.
    }

    \emph{Third step:} To provide the missing case $x=0$, we first observe that $\lim_{l \to 0} \mathbb{E}^0 \tau_l = 0$ -- because otherwise the radial Ornstein--Uhlenbeck $\rho_t$ would stay at the origin for a positive time with positive probability, which it does not.
    Moreover, by the strong Markov property of $X_t$, we have for all $l \in (0,L]$ that
    \begin{equation}
        \mathbb{E}^0 \tau_L = \mathbb{E}^0 \tau_l + \underbrace{\mathbb{E}^l \tau_L}_{=u(l)}.
    \end{equation}
    Hence, in the limit $l \to 0$, we indeed obtain $\mathbb{E}^0 \tau_L = u(0)$ which concludes the proof of Eq.~\eqref{eq:fet_roup}.
    Note that all above steps worked for all $\lambda = \frac{\theta}{\sigma^2} \in \mathbb{R}$, and thus Eq.~\eqref{eq:fet_roup} holds for all $\lambda \in \mathbb{R}$.

    To prove the missing Eq.~\eqref{eq:fet_roup_with_gamma} for $\lambda > 0$, we observe by a change of variable, using the substitution function $\phi(t) := \sqrt{\frac{t}{\lambda}}$ with derivative $\phi^{\prime}(t) = \sqrt{4 \lambda t}$, that
    \begin{align} \label{eq:substitution_step}
        \begin{split}
        \int_0^{z = \phi( \lambda z^2)} t^{d-1} \exp(-\lambda t^2) \mathrm{d}t  &=
        \int_0^{\lambda z^2} \sqrt{4 \lambda t} \left( \frac{t}{\lambda} \right)^{\frac{d-1}{2}} \exp(-t) \mathrm{d}t \, \cdot \,  \\ &= 
        \frac 12 \lambda^{-d/2} \underbrace{\int_0^{\lambda z^2} t^{\frac d2 - 1} \exp(-t) \, \mathrm{d}t}_{=\gamma\left( \frac d2, \lambda z^2 \right)}.
        \end{split}
    \end{align}
    Insertion of the above formula into Eq.~\eqref{eq:fet_roup} yields Eq.~\eqref{eq:fet_roup_with_gamma}.
\end{proof}

\subsection{Scaling for $d \to \infty$}
\label{subsec:scaling_in_d}

Equipped with our formula ~\eqref{eq:fet_roup_with_gamma}, we will derive the scaling for dimension $d \to \infty$ by use of the following existing bounds on the incomplete Gamma function.
\begin{lemma}[Theorem 4.1.~by \citet{Neuman2013InequalitiesGammaFunction}]\label{theorem:Neuman2013}
    For the lower incomplete Gamma function $\gamma$ from Eq.~\eqref{eq:def_gamma}, the inequalities
    \begin{equation}
        \frac{x^a}{a} \exp\left(\frac{-ax}{a+1}\right)  \leq
        \gamma(a,x)  \leq
        \frac{x^a}{a(a+1)} (1+a\exp(-x))
    \end{equation}
    are valid for all $a>0$.
\end{lemma}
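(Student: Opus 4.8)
The plan is to reduce both inequalities to elementary convexity estimates for the exponential function after a change of variables that normalizes the integration interval to $[0,1]$. Substituting $t = xs$ in the definition \eqref{eq:def_gamma} gives
\[
    \gamma(a,x) = x^a \int_0^1 s^{a-1} e^{-xs}\,\mathrm{d}s ,
\]
so it suffices to sandwich $J := \int_0^1 s^{a-1} e^{-xs}\,\mathrm{d}s$ between $\frac{1}{a}\exp\!\big(-\tfrac{ax}{a+1}\big)$ and $\frac{1 + a e^{-x}}{a(a+1)}$ and then multiply through by $x^a$. The key structural observation is that $a\,s^{a-1}\,\mathrm{d}s$ is a probability measure on $[0,1]$ for every $a>0$ (this is the only place positivity of $a$ enters), and that its mean equals $\int_0^1 s\cdot a s^{a-1}\,\mathrm{d}s = \frac{a}{a+1}$.

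For the lower bound I would apply Jensen's inequality to the convex function $s\mapsto e^{-xs}$ against this probability measure:
\[
    a\,J = \int_0^1 e^{-xs}\, a s^{a-1}\,\mathrm{d}s \;\ge\; \exp\!\left(-x\int_0^1 s\, a s^{a-1}\,\mathrm{d}s\right) = \exp\!\left(-\tfrac{ax}{a+1}\right),
\]
which, after dividing by $a$ and multiplying by $x^a$, is precisely the claimed left-hand inequality. Equivalently, one can integrate the tangent-line bound $e^{-xs} \ge e^{-ax/(a+1)}\big(1 - x(s-\tfrac{a}{a+1})\big)$ at the mean; the first-moment term vanishes and the same constant reappears.

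For the upper bound I would instead use convexity in the opposite direction: on $[0,1]$ the graph of $s\mapsto e^{-xs}$ lies below its chord through the endpoints, i.e.\ $e^{-xs} \le (1-s) + s\,e^{-x}$. Integrating this pointwise inequality term by term against $s^{a-1}\,\mathrm{d}s$,
\[
    J \le \int_0^1 s^{a-1}(1-s)\,\mathrm{d}s + e^{-x}\int_0^1 s^{a}\,\mathrm{d}s = \left(\frac{1}{a} - \frac{1}{a+1}\right) + \frac{e^{-x}}{a+1} = \frac{1 + a e^{-x}}{a(a+1)},
\]
and multiplying by $x^a$ yields the right-hand inequality.

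I do not anticipate a genuine obstacle, since this is an off-the-shelf result of \citet{Neuman2013InequalitiesGammaFunction} and the argument above is self-contained. The only points that need care are verifying that $a s^{a-1}\,\mathrm{d}s$ is genuinely a probability density with mean $a/(a+1)$ (so that Jensen applies), keeping straight that the exponential being convex means its chord lies \emph{above} and its tangent line \emph{below} the graph, and noting that the integrand $s^{a-1}e^{-xs}$ is integrable near $s=0$ exactly because $a>0$, which matches the hypothesis of the lemma.
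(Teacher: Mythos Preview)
Your argument is correct and self-contained. Note, however, that the paper itself does not supply a proof of this lemma: it is quoted verbatim as Theorem~4.1 of \citet{Neuman2013InequalitiesGammaFunction} and used as a black box in the proof of Theorem~\ref{th:exit_time_bounds_roup}. Your convexity-based derivation (Jensen against the probability measure $a s^{a-1}\,\mathrm{d}s$ for the lower bound, and the secant/chord estimate $e^{-xs}\le (1-s)+se^{-x}$ for the upper bound) is in fact essentially how Neuman proves it, via a weighted Hermite--Hadamard inequality, so there is no discrepancy to discuss.
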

\begin{figure}
    \centering
    \includegraphics[width=.45\columnwidth]{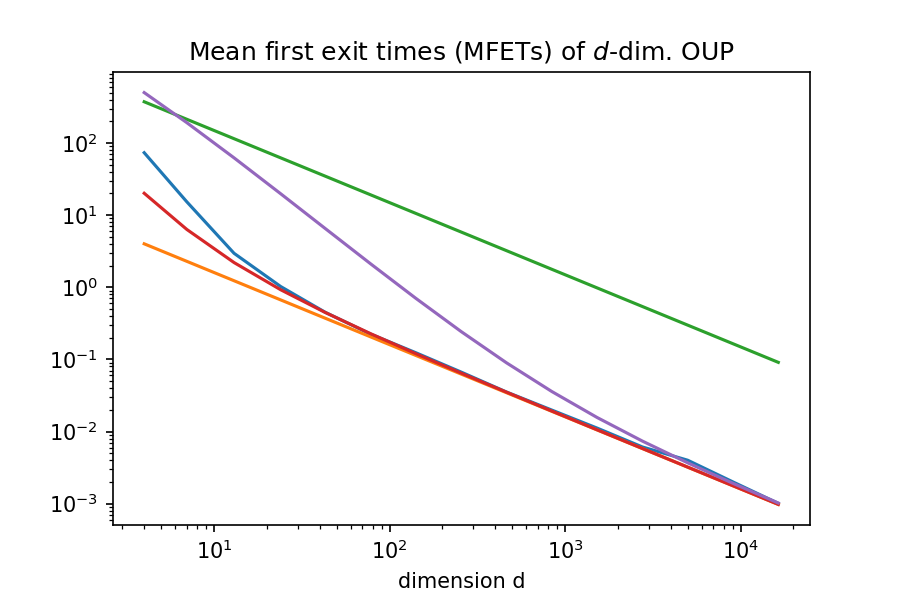}
    \includegraphics[width=.45\columnwidth]{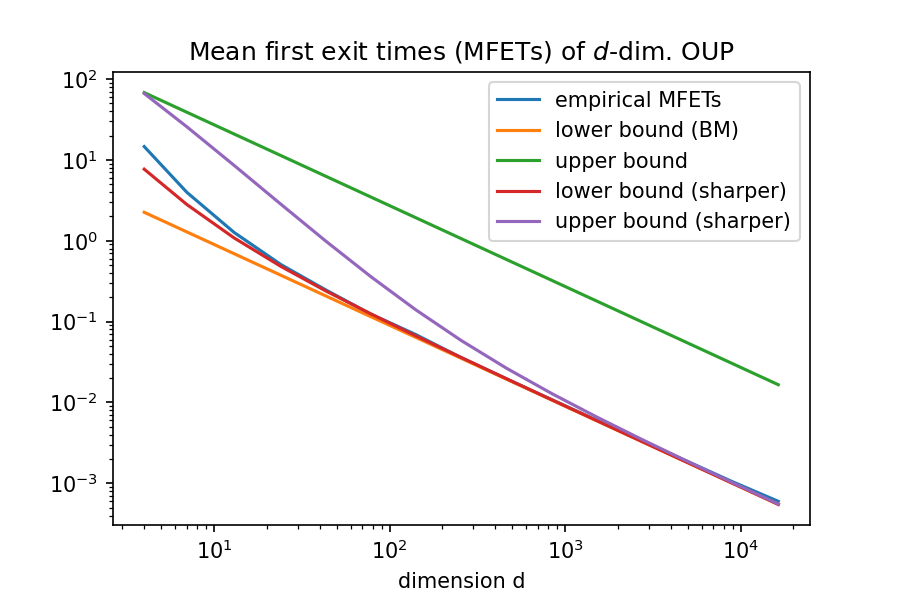}
    \caption{Log-log plot of the bounds of Theorem~\ref{th:exit_time_bounds_roup}. Parameters are set as $(L,x,\sigma,\lambda)=(4.0,0.0,1.0,0.5)$ (on the left plot) and as $(L,x,\sigma,\lambda)=(3.0,0.0,1.0,0.7)$ (on the right plot); but other parameters give the same behavior. The BM lower bound is the smaller lower bound that is independent of $\lambda$ (which is the exact MFET of BM). To compute the MFETs, 100 simulations with a step size of 0.001 (on the left) and of 0.0001 (on the right) were run for each choice of $d$. In both plots, the step size was chosen to reproduce the upper bounds (otherwise the point of exit can be missed by the discrete time steps); smaller choices also work. The bounds can be computed in closed form with our Eqs.~\eqref{bounds:tau_x_upper_boundI}--\eqref{bounds:eq_bla_II}. The bounds from Theorem~\ref{th:exit_time_bounds_roup} are verified by this plot. Remarkably, as we prove in Corollary~\ref{cor:OUPlikeBM}, the asymptotics of OUP $(\theta > 0)$ are equivalent to the BM lower bound ($\theta = 0$).} 
    \label{fig:roup_mfet_scaling_in_d}
\end{figure}
Now, we use the above result, to show the the scaling of $\mathbb{E}^x \tau_L$ in $d$.
The result of the following theorem is verified \blue{numerically} in Fig.~\ref{fig:roup_mfet_scaling_in_d}.
\begin{theorem} \label{th:exit_time_bounds_roup}
    Let $L>0$ and let $x \in [0,L]$ such that a.s.~$\Vert X_0 \Vert = x$.
    Denote $\lambda := \frac{\theta}{\sigma^2}$.
    Let $\theta > 0$ (or equivalently $\lambda = \frac{\theta}{\sigma^2} > 0$).
    Then, we have the following upper bounds
    \begin{align}\label{bounds:tau_x_upper_boundI}
        \mathbb{E}^x \tau_L \ &\leq \ 
        \frac{2}{\sigma^2 d (d+2)} \left( \frac{1}{\lambda} \left[  \exp(\lambda L^2) - \exp(\lambda x^2) \right] + \frac{d}{2} \left[ L^2 - x^2 \right] \right) \\ \ &\leq \
        \frac{1}{\theta}\left[ \exp(\lambda L^2) - \exp(\lambda x^2) \right] d^{-1},
        \label{bounds:tau_x_upper_boundII}
    \end{align}
    and lower bounds
    \begin{align}
        \label{bounds:eq_bla}
        \mathbb{E}^x \tau_L \ &\geq \
        \frac{1 + \frac 2d}{2\lambda \sigma^2} \left[ \exp\left(\frac{2 \lambda}{d+2} L^2\right) - \exp\left(\frac{2 \lambda}{d+2} x^2\right)  \right] \\ \ &\geq \
        \left[\frac{L^2 - x^2}{\sigma^2} \right] d^{-1}.
        \label{bounds:eq_bla_II}
    \end{align}
\end{theorem}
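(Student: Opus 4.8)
The plan is to feed the two-sided bounds on the lower incomplete Gamma function from Lemma~\ref{theorem:Neuman2013} directly into the closed form \eqref{eq:fet_roup_with_gamma}, applying them with $a = d/2$ and argument $\lambda z^2$. The calculation becomes clean because of a single cancellation: the bounds of Lemma~\ref{theorem:Neuman2013} all carry a factor $(\lambda z^2)^{d/2} = \lambda^{d/2} z^d$, which exactly annihilates the prefactors $\lambda^{-d/2}$ and $z^{1-d}$ sitting in front of $\gamma(d/2,\lambda z^2)$ in \eqref{eq:fet_roup_with_gamma}, leaving behind only a factor $z$ times exponentials. After this, every integrand in sight is elementary and is integrated via $\int z e^{c z^2}\,\mathrm{d}z = \tfrac{1}{2c} e^{c z^2}$ and $\int z\,\mathrm{d}z = \tfrac12 z^2$.

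For the upper bounds, I substitute the upper Neuman bound $\gamma(d/2,\lambda z^2)\le \frac{(\lambda z^2)^{d/2}}{(d/2)(d/2+1)}\big(1+\tfrac d2 e^{-\lambda z^2}\big)$, use $(d/2)(d/2+1)=d(d+2)/4$, and reduce to $\mathbb{E}^x\tau_L \le \frac{4}{\sigma^2 d(d+2)}\int_x^L\big(z e^{\lambda z^2}+\tfrac d2 z\big)\,\mathrm{d}z$; evaluating the two integrals gives exactly \eqref{bounds:tau_x_upper_boundI}. To pass from \eqref{bounds:tau_x_upper_boundI} to \eqref{bounds:tau_x_upper_boundII}, I use $\theta=\lambda\sigma^2$ together with the elementary inequality $e^{b}-e^{a}\ge b-a$ for $b\ge a\ge 0$ (since $\tfrac{\mathrm{d}}{\mathrm{d}t}e^t = e^t\ge 1$ on $t\ge 0$), applied with $b=\lambda L^2$, $a=\lambda x^2$: this lets the polynomial term $\tfrac d2[L^2-x^2]$ be absorbed, and a short rearrangement collapses the bound to $\frac{1}{\theta}[\exp(\lambda L^2)-\exp(\lambda x^2)]d^{-1}$.

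For the lower bounds, I substitute the lower Neuman bound $\gamma(d/2,\lambda z^2)\ge \frac{(\lambda z^2)^{d/2}}{d/2}\exp\!\big(-\tfrac{(d/2)\lambda z^2}{d/2+1}\big)$; the exponent simplifies to $-\tfrac{d}{d+2}\lambda z^2$, and combining with the $e^{\lambda z^2}$ factor from \eqref{eq:fet_roup_with_gamma} leaves $\exp\!\big(\tfrac{2\lambda}{d+2}z^2\big)$ because $\lambda-\tfrac{d}{d+2}\lambda=\tfrac{2\lambda}{d+2}$. After the cancellation this yields $\mathbb{E}^x\tau_L\ge \frac{2}{\sigma^2 d}\int_x^L z\exp\!\big(\tfrac{2\lambda}{d+2}z^2\big)\,\mathrm{d}z$, and evaluating the integral produces \eqref{bounds:eq_bla}. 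One further application of $e^{b}-e^{a}\ge b-a$, now with $b=\tfrac{2\lambda}{d+2}L^2\ge a=\tfrac{2\lambda}{d+2}x^2\ge 0$, gives the $\theta$-independent bound \eqref{bounds:eq_bla_II} (which, as noted, coincides with the exact MFET of Brownian motion).

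I do not expect a genuine obstacle: the argument is essentially bookkeeping once the cancellation $(\lambda z^2)^{d/2}=\lambda^{d/2}z^d$ is spotted. The points requiring care are (i) tracking the exponent arithmetic so that $\tfrac{2\lambda}{d+2}$ really emerges in the lower bound, (ii) checking that the elementary inequality $e^b-e^a\ge b-a$ is invoked only when both endpoints are nonnegative — which is guaranteed throughout since $\lambda>0$ and $x\ge 0$ — and (iii) confirming that each of the four displayed inequalities is exactly what this mechanical computation outputs, with no hidden slack needed.
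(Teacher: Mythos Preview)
Your proposal is correct and follows essentially the same strategy as the paper: insert Neuman's bounds on $\gamma(d/2,\lambda z^2)$ into Eq.~\eqref{eq:fet_roup_with_gamma}, exploit the cancellation $(\lambda z^2)^{d/2}=\lambda^{d/2}z^d$, and integrate the remaining elementary functions. The only minor difference is in how the looser inequalities \eqref{bounds:tau_x_upper_boundII} and \eqref{bounds:eq_bla_II} are obtained: the paper first coarsens the pointwise bound on $\gamma$ (via $1+\tfrac d2 e^{-\lambda z^2}\le \tfrac{d+2}{2}$, resp.\ $\tfrac{d}{d+2}\le 1$) and then integrates, whereas you integrate first to get \eqref{bounds:tau_x_upper_boundI} and \eqref{bounds:eq_bla} and then apply the mean-value estimate $e^b-e^a\ge b-a$ to the evaluated expression --- both routes are equally short and give exactly the stated bounds.
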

\begin{proof}
    We first show the upper bounds and then the lower bounds.

    \emph{Upper bounds:}
    By the second inequality from Lemma~\ref{theorem:Neuman2013}, we have 
    \begin{align} \label{upperboundproof:ineqI}
        \gamma(d/2,\lambda z^2)  \ &\leq \
        \frac{4 \lambda^{d/2} z^d}{d(d+2)} \cdot \left[ 1 + \frac{d}{2} \exp(-\lambda z^2) \right]
        \\ &\leq \
        \frac{4 \lambda^{d/2} z^d}{d(d+2)} \cdot \frac{2+d}{2} = \frac{2 \lambda^{d/2} z^d}{d}. 
        \label{upperboundproof:ineqII}
    \end{align}
    Insertion of the inequality \eqref{upperboundproof:ineqI} into Eq.~\eqref{eq:fet_roup_with_gamma} yields
    \begin{equation} \label{eq:first_proved_inequality}
        \mathbb{E}^x \tau_L  \ \leq \
        \frac{4}{\sigma^2 d (d+2)} \cdot \underbrace{\int_x^L z \exp\left(\lambda z^2\right)\left[ 1 + \exp(\lambda z^2) \right] \, \mathrm{d}z}_{= \int_x^L z \exp(\lambda z^2) \mathrm{d}z  + \frac d2 \int_x^L z \mathrm{d}z \ = \ \left[\frac{1}{2 \lambda} \exp(\lambda z^2) \right]_{z=x}^{z=L}+ \frac d4 [L^2 - x^2]},
    \end{equation}
    which is Eq.~\eqref{bounds:tau_x_upper_boundI}.
    For the missing upper bound \eqref{bounds:tau_x_upper_boundII}, we insert the less sharp inequality \eqref{upperboundproof:ineqII} into Eq.~\eqref{eq:fet_roup_with_gamma}:
    \begin{equation} \label{eq:second_proved_inequality}
        \mathbb{E}^x \tau_L  \ \leq \
        \frac{2}{\sigma^2 d} \int_x^L z \exp\left( \lambda z^2 \right) \, \mathrm{d}z =
        \frac{1}{\theta}\left[ \exp(\lambda L^2) - \exp(\lambda x^2) \right] d^{-1},
    \end{equation}
    where we used that $\lambda = \theta/\sigma^2$.
    Since the inserted upper bound from inequality \eqref{upperboundproof:ineqI} is sharper than the one from \eqref{upperboundproof:ineqII}, the first proved inequality \eqref{eq:first_proved_inequality} is lower than the second one \eqref{eq:second_proved_inequality}.

    \emph{Lower bounds:}
    By the first inequality from Lemma~\ref{theorem:Neuman2013}, we have
    \begin{equation} \label{bounds_for_eq_bla}
        \gamma(d/2,\lambda z^2)
        \ \geq \ 
        \frac{2}d (\lambda z^2)^{d/2} \exp\left(- \frac{d/2}{d/2 + 1} \lambda z^2\right)
        \ \geq \
        \frac{2}d (\lambda z^2)^{d/2} \exp\left(- \lambda z^2\right).
    \end{equation}
    Now, for the inequalities in Eqs.~\eqref{bounds:eq_bla} and \eqref{bounds:eq_bla_II}, we insert the first and second inequality from Eq.~\eqref{bounds_for_eq_bla} into Eq.~\eqref{eq:fet_roup_with_gamma}, respectively.
    For the first inequality, we thereby obtain
    \begin{align}
        \mathbb{E}^x \tau_L  \ &\geq \
        \frac{2}{\sigma^2 d} \int_x^L \exp\left( \frac{2}{d+2} \lambda z^2 \right) z \ \mathrm{d} z
        \ = \ 
        \frac{2}{\sigma^2 d} \left[ \frac{d+2}{4 \lambda} \exp\left(\frac{2 \lambda}{d+2} z^2\right) \right]_{z=x}^L \\
        &= \ 
        \frac{1 + \frac 2d}{2\lambda \sigma^2} \left[ \exp\left(\frac{2 \lambda}{d+2} L^2\right) - \exp\left(\frac{2 \lambda}{d+2} x^2\right)  \right].
        \label{eq:brzz1}
    \end{align}
    For the second inequality, we analogously obtain
    \begin{equation}
        \mathbb{E}^x \tau_L  \ \geq \
        \frac{2}{\sigma^2 d} \int_x^L z \ \mathrm{d}z =
        \left[\frac{L^2 - x^2}{\sigma^2}\right] d^{-1}.
        \label{eq:brzz2}
    \end{equation}
    As for the upper bounds, the ordering of inequalities in Eq.~\eqref{bounds_for_eq_bla} implies that the lower bound in Eq.~\eqref{eq:brzz2} is even lower than the one in Eq.~\eqref{eq:brzz1}.
\end{proof}
\begin{corollary}\label{cor:OUPlikeBM}
    Let $L>0$ and let $x \in [0,L]$ such that a.s.~$\Vert X_0 \Vert = x$.
    Then, we have, for $d \to \infty$, that
    \begin{equation}  \label{eq:corollary_corollary}
        \mathbb{E}^x \tau_L \ \sim \ \left[\frac{L^2 - x^2}{\sigma^2}\right] d^{-1}, \qquad \forall \, \theta \geq 0,
    \end{equation}
    i.e.~the MFET of a $d$-dimensional radial Ornstein Uhlenbeck process with \blue{arbitrary} $\theta > 0$ is asymptotically equivalent to the one of a $d$-dimensional Brownian motion with $\theta = 0$. 
    (See e.g.~Eq.~(7.4.2) in \citet{oksendal2003stochastic} for a proof that the MFET of a $d$-dimensional Brownian motion is equal to the right-hand side of Eq.~\eqref{eq:corollary_corollary}.)
\end{corollary}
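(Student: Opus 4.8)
The plan is to prove \eqref{eq:corollary_corollary} by squeezing $d\cdot\mathbb{E}^x\tau_L$ between the non-asymptotic bounds of Theorem~\ref{th:exit_time_bounds_roup} and showing that both sides converge to $(L^2-x^2)/\sigma^2$. The case $\theta=0$ is nothing but the classical Brownian-motion formula $\mathbb{E}^x\tau_L=(L^2-x^2)/(\sigma^2 d)$ (e.g.~Eq.~(7.4.2) in \citet{oksendal2003stochastic}), so it suffices to treat $\theta>0$, i.e.~$\lambda=\theta/\sigma^2>0$; and the degenerate case $x=L$ is trivial since then $\tau_L=0$ a.s. Throughout, $L$, $x$, $\sigma$, $\theta$ are held fixed while $d\to\infty$, so that every term which is manifestly $O(d^{-2})$ is genuinely of lower order than $d^{-1}$.

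For the lower side I would simply invoke the second lower bound \eqref{bounds:eq_bla_II}, which already states $\mathbb{E}^x\tau_L\ge (L^2-x^2)/(\sigma^2 d)$ for every $d$; multiplying by $d$ gives $\liminf_{d\to\infty} d\,\mathbb{E}^x\tau_L \ge (L^2-x^2)/\sigma^2$. Equivalently, one may Taylor-expand the exponentials in the sharper bound \eqref{bounds:eq_bla}: with $\varepsilon_d:=2\lambda/(d+2)\to0$ one has $e^{\varepsilon_d L^2}-e^{\varepsilon_d x^2}=\varepsilon_d(L^2-x^2)+O(\varepsilon_d^2)$, so the right-hand side of \eqref{bounds:eq_bla} equals $(L^2-x^2)/(\sigma^2 d)+O(d^{-2})$; either way the $\liminf$ statement follows.

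For the upper side I would multiply the sharper upper bound \eqref{bounds:tau_x_upper_boundI} by $d$, obtaining
\[
d\,\mathbb{E}^x\tau_L \ \le\ \frac{2}{\sigma^2(d+2)}\cdot\frac{1}{\lambda}\,[\,e^{\lambda L^2}-e^{\lambda x^2}\,]\ +\ \frac{d}{d+2}\cdot\frac{L^2-x^2}{\sigma^2}.
\]
The first summand is $O(d^{-1})\to0$ and the second tends to $(L^2-x^2)/\sigma^2$, whence $\limsup_{d\to\infty} d\,\mathbb{E}^x\tau_L \le (L^2-x^2)/\sigma^2$. Combining the two sides yields $\lim_{d\to\infty} d\,\mathbb{E}^x\tau_L=(L^2-x^2)/\sigma^2$, which is precisely \eqref{eq:corollary_corollary}.

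I do not expect a genuine obstacle: the analytic work has already been absorbed into Theorem~\ref{th:exit_time_bounds_roup} (and ultimately into the incomplete-Gamma inequalities of Lemma~\ref{theorem:Neuman2013}), and what remains is a routine limit. The one point requiring care is the choice of bounds. While the looser lower bound \eqref{bounds:eq_bla_II} is already exactly $(L^2-x^2)/(\sigma^2 d)$, the looser upper bound \eqref{bounds:tau_x_upper_boundII} is only $\Theta(d^{-1})$ with a strictly larger leading constant, namely $\theta^{-1}[e^{\lambda L^2}-e^{\lambda x^2}]>(L^2-x^2)/\sigma^2$ for $\lambda>0$ (since $e^{\lambda L^2}-e^{\lambda x^2}=\int_{x^2}^{L^2}\lambda e^{\lambda s}\,\mathrm{d}s>\lambda(L^2-x^2)$). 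Hence the asymptotic equivalence forces one to use the sharper upper bound \eqref{bounds:tau_x_upper_boundI}, not \eqref{bounds:tau_x_upper_boundII}; after that the squeeze closes immediately.
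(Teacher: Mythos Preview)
Your proposal is correct and follows essentially the same route as the paper: both arguments use the sharper upper bound \eqref{bounds:tau_x_upper_boundI} together with the lower bound \eqref{bounds:eq_bla_II} and squeeze $d\,\mathbb{E}^x\tau_L$ to $(L^2-x^2)/\sigma^2$. Your write-up is in fact a bit more careful---you single out the cases $\theta=0$ and $x=L$, and you explain explicitly why the looser upper bound \eqref{bounds:tau_x_upper_boundII} would not close the gap---but the substance is identical.
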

\begin{proof}
    From Eq.~\eqref{bounds:tau_x_upper_boundI}, we know that
    \begin{equation}
        \mathbb{E}^x \tau_L \ \leq \ 
        \frac{2}{\lambda \sigma^2 d (d+2)} \left[  \exp(\lambda L^2) - \exp(\lambda x^2) \right] + \frac{1}{\sigma^2 (d+2)} \left[ L^2 - x^2 \right],
    \end{equation}
    where the first term is of order $O(d^{-2})$ and the second of order $O(d^{-1})$.
    Thus, the first term is irrelevant for the asymptotics and we have 
    \begin{equation}
        \limsup_{d \to \infty}
        \frac{\mathbb{E}^x \tau_L}{\sigma^2 d/(L^2 - x^2)}
        =
        \limsup_{d \to \infty}
        \frac{\mathbb{E}^x \tau_L}{\sigma^2 (d+2)/(L^2 - x^2)}
        \leq  1.
    \end{equation}
    On the other hand, by Eq.~\eqref{bounds:eq_bla_II}, we have that
    \begin{equation}
        \liminf_{d \to \infty}
        \frac{\mathbb{E}^x \tau_L}{\sigma^2 d/(L^2 - x^2)}
        \geq 1.
    \end{equation}
    This means that the liminf and the limsup coincide at $1$, i.e.~
    \begin{equation}
        \lim_{d \to \infty}
        \frac{\mathbb{E}^x \tau_L}{\sigma^2 d/(L^2 - x^2)}
        = 1,
    \end{equation}
    which is equivalent to the desired Eq.~\eqref{eq:corollary_corollary}.
\end{proof}

\section{Discussion}
\label{sec:discussion}

Theorem~\ref{th:exit_time_bounds_roup} provides asymptotically tight bounds for the MFET of a $d$-dimensional OUP from a ball of finite radius $L > 0$, as $d \to \infty$.
These bounds imply by virtue of Corollary~\ref{cor:OUPlikeBM} that, in high dimensions, the $d$-dimensional OUP takes (on average) no longer than a $d$-dimensional BM to exit a ball of radius $L$.
While we provided a rigorous proof above, this section gives \blue{some} intuition on whether this result is surprising and \blue{why the drift coefficient $\theta$ does not impact $\mathbb{E}^x \tau_L$ in high dimensions.}

\paragraph{Is Corollary~\ref{cor:OUPlikeBM} surprising?} \blue{(Short answer: At first, it might be; but a careful examination dispels the surprise.)} At first sight, Corollary~\ref{cor:OUPlikeBM} may be surprising.
After all, the mean-reverting drift of OUP (parametrized by $\theta > 0$) makes all the difference with the drift-less BM. 
In fact, due to the drift, the analytical properties of OUP and BM differ significantly; e.g., the OUP has a stationary distribution (unlike the BM) and its maximum grows in $\theta$ \citep{graversen2000maximal,jia2020moderate}.
But, our result suggests that~--~for the MFET~--~the drift becomes irrelevant as $d \to \infty$.
The following paragraphs contain an attempt to explain this.

First, we want to discern which part of our result might be surprising.
We proved that (i) the MFET $\mathbb{E}^x \tau_L$ goes to zero, and that (ii) $\mathbb{E}^x \tau_L$ is for any $\theta > 0$ asymptotically equivalent to $\mathbb{E}^x \tau_L$ when $\theta = 0$ (Brownian motion case).
We feel that, while point (i) is unsurprising, point (ii) may be surprising at first (before a closer examination on the drift below).

Regarding (i), it is clear that, as $d$ grows, the first-exit time $\tau_L$ will converge to zero (in \blue{probability}) for all choices of $\theta$ (or in fact, for any $d$-dimensional stochastic process with iid.~components). 
This can be seen from Eq.~\eqref{eq:def_tau}, where $\tau_L$ was defined as $\tau_L = \inf\{t>0: \Vert X_t \Vert^2_2 = \sum_{i=1}^d X_{i,t}^2 = L^2 \}$. 
Since the processes $X_{i,t}$ are iid., the sum $\sum_{i=1}^d X_{i,t}^2$ will likely reach $L^2$ faster for a high value of $d$. 
Thus, each value of the cumulative distribution function of $\tau_L$, as well as its mean, will monotonously decrease to zero, as $d \to \infty$.
But, from this, it does not follow how the asymptotic rates depend on drift coefficient $\theta$, as $d \to \infty$.
\blue{Next, we will explain why this is the case.}

\begin{figure}
    \centering
    \includegraphics[width=.45\columnwidth]{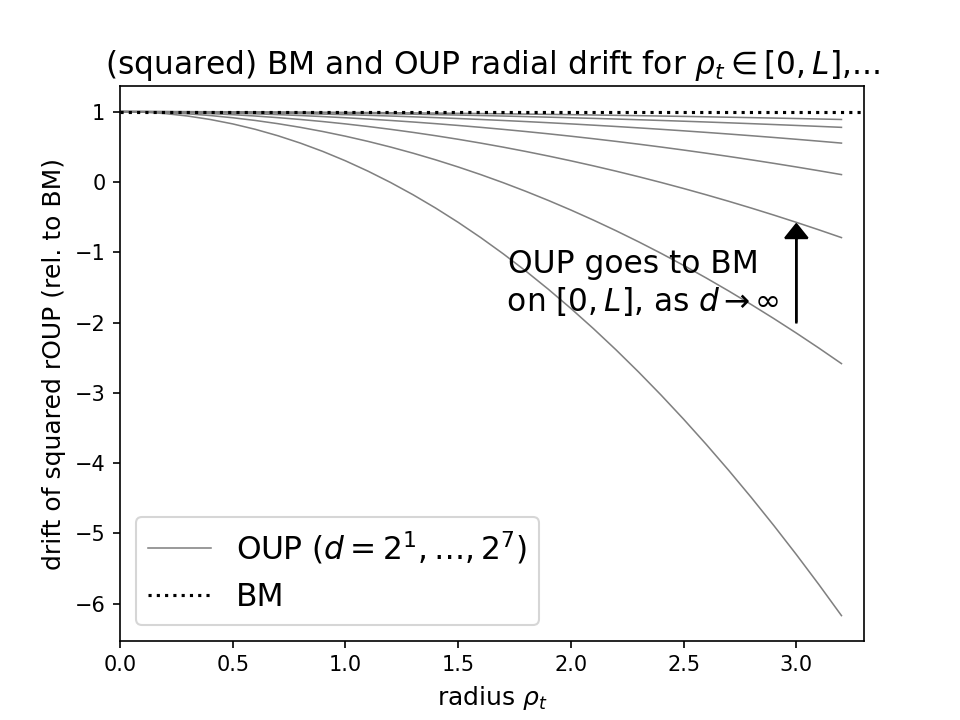}
    \includegraphics[width=.45\columnwidth]{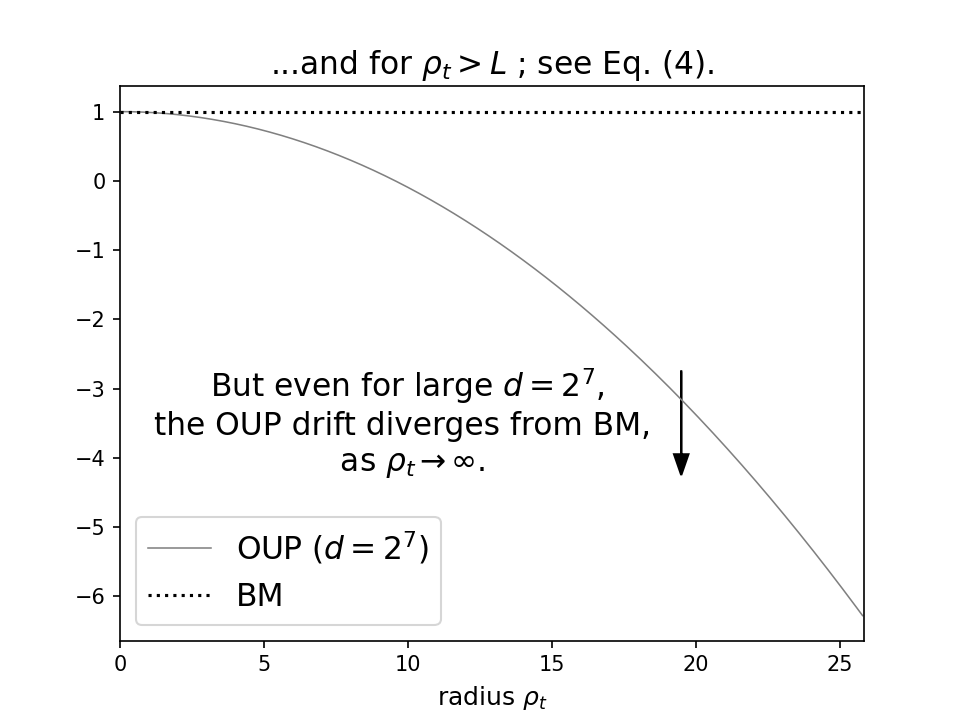}
    \caption{\blue{Simulations to give intuition for Corollary~\ref{cor:OUPlikeBM}. Parameters are set as $L=3.0$, $\sigma^2 = 1.0$, and $\theta = 0.7$. Both the \emph{left} and the \emph{right} plot show the drift of the squared radial OUP $\sigma^2 d - 2 \theta$ divided by the squared BM drift $\sigma^2 d$, i.e.~$\tfrac{\sigma^2 d - 2 \theta}{\sigma^2 d}$, on different domains; see Eq.~\eqref{eq:sde_rho2} for these drift terms. Hence, BM is a vertical line at $y=1$ and the OUPs have lower values (in some cases negative, i.e.~the drift pushes back to zero). 
    The \emph{left} plot shows these ratios for $d=2^1,2^2,...,2^7$ in gray, on $\rho_t \in [0,L]$. The $k$\textsuperscript{th} line from below belongs to $d=2^k$. We can see that, as $d \to \infty$, the OUP drift approaches BM. 
    However the \emph{right} plot shows that, for $\rho_t > L$, even the highest OUP drift $d=2^7$ still diverges to $-\infty$, as $\rho_t \to \infty$.
    In summary, the left plot explains why $\mathbb{E}^x \tau_L$ is asymptotically independent of $\theta$, while the right plot highlights that the rOUP still depends on $\theta$ for large values of $\rho_t$.
    More details in main text.}
    } 
    \label{fig:rebuttal_plots}
\end{figure}

\paragraph{\blue{For $\rho_t \in [0,L]$, the dynamics of the rOUP is asymptotically independent of $\theta$, as $d \to \infty$.}} 
\blue{From Eq.~\eqref{eq:def_tau}, we can rewrite the first exit time as $\tau_L = \sup_{t\geq 0}\{0 \leq \rho_s^2 \leq L^2, \forall s \in [0,t] \}$.
This means that, for $\tau_L$, only the dynamics of $\rho^2_t$ on the interval $\rho_t \in [0,L]$ matters.
But the SDE of $\rho_t^2$, Eq.~\eqref{eq:sde_rho2}, only depends on  $\theta$ through the SDE's drift coefficient $(\sigma^2 d - 2\theta\rho_t^2)$, and this drift coefficient becomes irrelevant for $d\to \infty$ on the bounded interval $\rho_t \in [0,L]$:
\begin{equation} \label{eq:shape_of_drift_coefficient}
    \sigma^2 d - 2\theta \rho_t^2
    \ \sim \
    \sigma^2 d, \qquad \text{ for } d \to \infty,
\end{equation}
because $\sigma^2$, $\theta$ and $L$ are chosen as constants independent of $d$.
Note that the right-hand side, $\sigma^2 d$, is indeed the drift of the $d$-dim.~squared Bessel process, i.e.~of the squared rOUP $\rho_t^2$ with $\theta = 0$ \citep[Eq.~(1.1)]{Pitman2018bessel}.
Hence, Eq.~\eqref{eq:shape_of_drift_coefficient} shows that the coefficients of the SDE of $\rho_t^2$, Eq.~\eqref{eq:sde_rho2}, is asymptotically independent of $\theta$ on $\rho_t \in [0,L]$.
The left subplot of Fig.~\ref{fig:rebuttal_plots} visualizes this effect. With this in mind, it is unsurprising that the MFET $\mathbb{E}^x \tau_L$ becomes independent of $\theta$, as $d\to\infty$.
}

\blue{
\paragraph{For $\rho_t \in \mathbb{R}_{>0}$, the dynamics of rOUP still depend on $\theta$.}
For unbounded $\rho_t$ the situation is however different.
If $\rho_t^2$ can take arbitrarily large values in the left-hand side of Eq.~\eqref{eq:shape_of_drift_coefficient}, then the asymptotics do not hold.
This is demonstrated on the right subplot of Fig.~\ref{fig:rebuttal_plots}.
In fact, as long as $L$ grows at least like $\mathcal{O}(\sqrt{d})$, we could still conclude from Eq.~\eqref{eq:shape_of_drift_coefficient} that the $d \to \infty$ asymptotics of $\mathbb{E}^x \tau_L$ will depend on $\theta$.}

\blue{
Therefore, it would be incorrect to conclude that Corollary~\ref{cor:OUPlikeBM}) implies that the $d$-dim.~squared rOUP $\rho_t^2$ will resemble the $d$-dim.~squared Bessel process on all of $\mathbb{R}_{>0}$, as $d \to \infty$.
But, on $[0,L]$, the SDE of the squared rOUP $\rho_t^2$ is asymptotically equivalent to the squared Bessel process.
(With this in mind, one can probably show more asymptotic similarities between these processes on compact domains.) 
}

\color{black}
\section{Conclusion}
\label{sec:conclusion}

In the above material, we proved two new theorems and one corollary.

First, Theorem~\ref{thm:fet_roup} gave two explicit formulas for the mean first exit time of a $d$-dimensional Ornstein--Uhlenbeck process from a ball of radius $L$.
The first of these formulas, Eq.~\eqref{eq:fet_roup}, coincides with the ones derived by prior work \blue{\citep{Graczyk2008ExitTimesOUP,grebenkov2014first}}, but our proof is very short (see discussion in Remark~\ref{rmk:relation_to_prior_work}) \blue{and leverages Andronov--Vitt--Pontryagin theory \citep{Schuss2010_StochasticProcesses}}.
The second of these formulas, Eq.~\eqref{eq:fet_roup_with_gamma}, is a novel reformulation in terms of the incomplete Gamma function.

Second, Theorem~\ref{th:exit_time_bounds_roup} exploits this reformulation by bounding the Gamma function as suggested by \citet{Neuman2013InequalitiesGammaFunction}.
The resulting bounds are new and are verified \blue{numerically} in Fig.~\ref{fig:roup_mfet_scaling_in_d}. 
Since the upper and lower bounds are asymptotically equivalent, they have the (perhaps \blue{at first} surprising) implication (Corollary~\ref{cor:OUPlikeBM}) that, for $d \to \infty$, the $d$-dimensional OUP takes no longer than a $d$-dimensional Brownian motion to exit a ball of arbitrary radius $L$.
Thus, for large $d$, the drift does not matter for the (mean) exit time of OUP.
This might be surprising because the $d$-dimensional Brownian motion is just an OUP without drift, i.e.~with $\theta = 0$, and one would expect that a larger drift back to zero leads to a slower exit (as is the case for small $d \in \mathbb{N}$).
The simulations in Fig.~\ref{fig:roup_mfet_scaling_in_d} verify this asymptotic relation between OUP and Brownian motion.
\blue{In Section~\ref{sec:discussion}, we then give intuition to disperse any initial surprise that readers might have experienced.}

Our findings shed light on some unusual behavior of the OUP in high dimensions.
We hope that our Corollary~\ref{cor:OUPlikeBM} will be applicable in the numerous research areas (biology, economics, machine learning, statistical mechanics, etc.), where high-dimensional OUPs are used for modeling. 

\subsection*{Acknowledgements}

We thank Ralf Metzler for his helpful comments on an early version of the manuscript. We thank the anonymous reviewers for their careful remarks which helped us to improve this paper.

{\small
\bibliographystyle{plainnat}
\bibliography{bibfile}

\begin{thebibliography}{19}
\providecommand{\natexlab}[1]{#1}
\providecommand{\url}[1]{\texttt{#1}}
\expandafter\ifx\csname urlstyle\endcsname\relax
  \providecommand{\doi}[1]{doi: #1}\else
  \providecommand{\doi}{doi: \begingroup \urlstyle{rm}\Url}\fi

\bibitem[Borodin and Salminen(2002)]{borodin2002handbook}
Andrei~N. Borodin and Paavo Salminen.
\newblock \emph{Handbook of {Brownian} Motion - Facts and Formulae}.
\newblock Probability and Its Applications. Birkh{\"a}user Basel, 2nd edition,
  2002.

\bibitem[Butler and King(2004)]{butler2004phylogenetic}
Marguerite~A. Butler and Aaron~A. King.
\newblock Phylogenetic comparative analysis: A modeling approach for adaptive
  evolution.
\newblock \emph{The {American} naturalist}, 164\penalty0 (6):\penalty0
  683--695, 2004.
\newblock URL \url{https://doi.org/10.1086/426002}.

\bibitem[Ciołek et~al.(2020)Ciołek, Marushkevych, and
  Podolskij]{Ciolek2020highdimOUP}
Gabriela Ciołek, Dmytro Marushkevych, and Mark Podolskij.
\newblock On {Dantzig} and {Lasso} estimators of the drift in a high
  dimensional {Ornstein--Uhlenbeck} model.
\newblock \emph{Electronic Journal of Statistics}, 14\penalty0 (2):\penalty0
  4395 -- 4420, 2020.
\newblock URL \url{https://doi.org/10.1214/20-EJS1775}.

\bibitem[Faugeras et~al.(2004)Faugeras, Touboul, and Cessac]{faugeras2009oup}
Olivier Faugeras, Jonathan Touboul, and Bruno Cessac.
\newblock A constructive mean-field analysis of multi population neural
  networks with random synaptic weights and stochastic inputs.
\newblock \emph{Frontiers in Computational Neuroscience}, 3\penalty0 (2009),
  2004.
\newblock URL \url{https://doi.org/10.3389/neuro.10.001.2009}.

\bibitem[Graczyk and Jakubowski(2008)]{Graczyk2008ExitTimesOUP}
Piotr Graczyk and Tomasz Jakubowski.
\newblock Exit times and {Poisson} kernels of the {Ornstein–Uhlenbeck}
  diffusion.
\newblock \emph{Stochastic Models}, 24\penalty0 (2):\penalty0 314--337, 2008.
\newblock URL \url{https://doi.org/10.1080/15326340802009337}.

\bibitem[Graversen and Peskir(2000)]{graversen2000maximal}
Svend-Erik Graversen and Goran Peskir.
\newblock Maximal inequalities for the {Ornstein-Uhlenbeck} process.
\newblock \emph{Proceedings of the American Mathematical Society}, 128\penalty0
  (10):\penalty0 3035--3041, 2000.
\newblock URL
  \url{https://www.ams.org/journals/proc/2000-128-10/S0002-9939-00-05345-4/S0002-9939-00-05345-4.pdf}.

\bibitem[Grebenkov(2014)]{grebenkov2014first}
Denis~S. Grebenkov.
\newblock First exit times of harmonically trapped particles: a didactic
  review.
\newblock \emph{Journal of Physics A: Mathematical and Theoretical},
  48\penalty0 (1):\penalty0 013001, 2014.
\newblock URL
  \url{https://iopscience.iop.org/article/10.1088/1751-8113/48/1/013001}.

\bibitem[Jia and Zhao(2020)]{jia2020moderate}
Chen Jia and Guohuan Zhao.
\newblock Moderate maximal inequalities for the {Ornstein-Uhlenbeck} process.
\newblock \emph{Proceedings of the American Mathematical Society}, 148\penalty0
  (8):\penalty0 3607--3615, 2020.
\newblock URL
  \url{https://www.ams.org/journals/proc/2020-148-08/S0002-9939-2020-14804-6/home.html}.

\bibitem[Li et~al.(2017)Li, Tai, and E]{Li2017SMEs}
Qianxiao Li, Cheng Tai, and Weinan E.
\newblock Stochastic modified equations and adaptive stochastic gradient
  algorithms.
\newblock In \emph{Proceedings of the 34th International Conference on Machine
  Learning}, volume~70 of \emph{Proceedings of Machine Learning Research},
  pages 2101--2110. PMLR, 2017.
\newblock URL \url{https://proceedings.mlr.press/v70/li17f.html}.

\bibitem[Lucchi et~al.(2022)Lucchi, Proske, Orvieto, Bach, and
  Kersting]{lucchi2022fOUP}
Aurelien Lucchi, Frank Proske, Antonio Orvieto, Francis Bach, and Hans
  Kersting.
\newblock On the theoretical properties of noise correlation in stochastic
  optimization.
\newblock In \emph{Advances in Neural Information Processing Systems}, 2022.
\newblock URL \url{https://openreview.net/forum?id=cNrglG_OAeu}.

\bibitem[Matsumoto and Kiryu(2016)]{matsumoto2016scoup}
Hirotaka Matsumoto and Hisanori Kiryu.
\newblock {SCOUP}: a probabilistic model based on the {Ornstein–Uhlenbeck}
  process to analyze single-cell expression data during differentiation.
\newblock \emph{BMC Bioinformatics}, 17\penalty0 (1):\penalty0 232, 2016.
\newblock URL \url{https://doi.org/10.1186/s12859-016-1109-3}.

\bibitem[Neuman(2013)]{Neuman2013InequalitiesGammaFunction}
Edward Neuman.
\newblock Inequalities and bounds for the incomplete {Gamma} function.
\newblock \emph{Results in Mathematics}, 63\penalty0 (3):\penalty0 1209--1214,
  2013.
\newblock URL \url{https://doi.org/10.1007/s00025-012-0263-9}.

\bibitem[Nguyen et~al.(2019)Nguyen, Simsekli, Gurbuzbalaban, and
  Richard]{nguyen2019first}
Thanh~Huy Nguyen, Umut Simsekli, Mert Gurbuzbalaban, and Ga\"{e}l Richard.
\newblock First exit time analysis of stochastic gradient descent under
  heavy-tailed gradient noise.
\newblock In \emph{Advances in Neural Information Processing Systems}, 2019.
\newblock URL
  \url{https://proceedings.neurips.cc/paper/2019/file/a97da629b098b75c294dffdc3e463904-Paper.pdf}.

\bibitem[{\O}ksendal(2003)]{oksendal2003stochastic}
Bernt {\O}ksendal.
\newblock \emph{Stochastic Differential Equations: An Introduction with
  Applications}.
\newblock Springer Berlin, Heidelberg, 6th edition, 2003.

\bibitem[Pitman and Winkel(2018)]{Pitman2018bessel}
Jim Pitman and Matthias Winkel.
\newblock {Squared Bessel processes of positive and negative dimension embedded
  in Brownian local times}.
\newblock \emph{Electronic Communications in Probability}, 23:\penalty0 1--13,
  2018.
\newblock URL \url{https://doi.org/10.1214/18-ECP174}.

\bibitem[Ricciardi and Sacerdote(1979)]{riccardi1979oup}
Luigi~M. Ricciardi and Laura Sacerdote.
\newblock The {Ornstein-Uhlenbeck} process as a model for neuronal activity. i.
  mean and variance of the firing time.
\newblock \emph{Biological cybernetics}, 35\penalty0 (1):\penalty0 1--9, 1979.
\newblock URL \url{https://doi.org/10.1007/BF01845839}.

\bibitem[Rohlfs et~al.(2014)Rohlfs, Harrigan, and Nielsen]{rohlfs2013modeling}
Rori~V. Rohlfs, Patrick Harrigan, and Rasmus Nielsen.
\newblock Modeling gene expression evolution with an extended
  {Ornstein--Uhlenbeck} process accounting for within-species variation.
\newblock \emph{Molecular Biology and Evolution}, 31:\penalty0 201--211, 2014.
\newblock URL \url{https://doi.org/10.1093/molbev/mst190}.

\bibitem[Schuss(2010)]{Schuss2010_StochasticProcesses}
Zeev Schuss.
\newblock \emph{Theory and Applications of Stochastic Processes: An Analytical
  Approach}, volume 170 of \emph{Applied Mathematical Sciences}.
\newblock Springer New York, 2010.

\bibitem[Teschl(2012)]{Teschl2012}
Gerald Teschl.
\newblock \emph{{Ordinary Differential Equations and Dynamical Systems}},
  volume 140 of \emph{Graduate Studies in Mathematics}.
\newblock American Mathematical Society, 2012.
\newblock URL \url{https://www.mat.univie.ac.at/~gerald/ftp/book-ode/ode.pdf}.

\end{thebibliography}
}

\end{document}